\newtheorem{theorem}{Theorem}
\newtheorem{lemma}[theorem]{Lemma}
\newenvironment{proof}[1][Proof]{\textbf{#1.} }{\ \rule{0.5em}{0.5em}}
\numberwithin{equation}{section}
\begin{document}


\textbf{On Fourier multipliers in function spaces with partial
H\"{o}lder condition and their application to the linearized
Cahn-Hilliard equation with dynamic boundary conditions.}

\bigskip

S.P.Degtyarev

\bigskip

\bigskip

\textbf{Institute of Applied Mathematics and Mechanics, National
Academy of Sciences of Ukraine, Donetsk}

\bigskip

We give relatively simple sufficient conditions on a Fourier multiplier, so
that it maps functions with the H\"{o}lder property with respect to a
part of the variables to functions with the H\"{o}lder property with respect
to all variables. With the using of these sufficient conditions we prove the
solvability in H\"{o}lder classes of the initial-boundary value
problems for the linearized Cahn-Hilliard equation with dynamic boundary
conditions of two types. For the solutions of these problems Schauder
estimates are obtained.

The final expanded version of this paper is available at AIMS Journals, 
Evolution Equations and Control Theory (EECT) at 

http://www.aimsciences.org/journals/displayArticlesnew.jsp?paperID=11870

\bigskip

MSC \ \ \ 42B15, 42B37, 35G15, 35G16, 35R35

\bigskip

\bigskip

\section{Introduction.}

\label{s1}

The starting point for this paper is the paper by O.A.Ladyzhenskaya
\cite{1}. (see also \cite{2}). The original idea of the reasonings
from \cite{1} and \cite{2}, as it was pointed in these papers, is
taken from \cite{Herm}, Theorem 7.9.6 and this idea is based on the
Littlewood-Paley  decomposition. However, papers \cite{1} and \cite{2}
deal with more general than in \cite{Herm} case of anisotropic
H\"{o}lder spaces. Moreover, paper \cite{1} gives some simple
sufficient conditions on a Fourier multiplier to provide bounded
mapping with this multiplier in anisotropic H\"{o}lder spaces.
These conditions can be easily verified in particular problems for
partial differential equations as it was demonstrated in \cite{1}.


It is important that the sufficient conditions from \cite{1} can be
comparatively easily verified in the case when a multiplier is an
anisotropic-homogeneous  function of degree zero (or it is close to
such a function in some sense). And it is also fundamentally
important in \cite{1} that the anisotropy of a H\"{o}lder space
where a multiplier acts must coincide with the anisotropy of the
multiplier (see Theorem \ref{T1.1} below).


Though the results of \cite{1} are applicable to a brad class of
problems for partial differential equations (as it was pointed out
in \cite{1}), they are still not applicable to many problems where
the anisotropy of a H\"{o}lder space does not coincide with the
anisotropy of a multiplier or where one should consider a
H\"{o}lder space of functions with the H\"{o}lder
conditions with respect only to a part of independent variables.
Among such problems we first mention "nonclassical" statements
connected to so called "Newton polygon" - see, for example,
\cite{3}-\cite{R1}. A particular subclass of such problems is the
class of problems for parabolic and elliptic equations with highest
derivatives in boundary conditions including the Wentsel conditions.
This subclass includes also problems with dynamic boundary
conditions- \cite{3}, \cite{4}, \cite{P2}-\cite{8.1}. As it is well
known, such problems for parabolic and elliptic equations are not
included in the standard general theory of parabolic boundary value
problems - see, for example, \cite{6}-\cite{6.2}.

In this paper we apply results about Fourier multipliers in spaces
of functions with partial H\"{o}lder condition to
initial-boundary value problems for linearized Cahn-Hilliard
equation with dynamic boundary conditions of two types. To our
knowledge earlier such problems were under investigation only in
spaces of functions with integrable derivatives - see, for example,
\cite{3}, \cite{4}, \cite{P2}-\cite{8.1}. Not that at the
considering of such problems we have the situation when the
anisotropy of a multiplier does not coincide with the anisotropy of
the corresponding H\"{o}lder space. In this situation we
consider smoothness of functions with respect to each of their
variables separately.

Note also that problems with dynamic boundary conditions arise as a
linearization of many well-known free boundary problems such as the
Stefan problem, the two-phase filtration problem for two
compressible fluids (the parabolic version of the Muskat-Verigin
problem), the Hele-Shaw problem, the classical evolutionary
Muskat-Verigin problem for elliptic equations.


Besides, the studying of smoothness of solutions of some problems
with respect to only a part of independent variables (including
obtaining corresponding Schauder's estimates) has it's own history
and it is an important direction of investigations. In particular,
we deal with such situation when considering semigroups of operators
with parameter $t>0$ and with a generator defined on some
H\"{o}lder space see, for example, \cite{35}- \cite{52}.

In all such cases we can use a theorem about multipliers in spaces
of functions with smoothness with respect to only a part of
independent variables because it permits to consider the smoothness
with respect to each variable separately.

Let us introduce now some notation and formulate an assertion, which
is a simple consequence of Theorem 2.1 and lemmas 2.1, 2.2 from
\cite{1} because we need it for references.

Let for a natural number  $N$

\begin{equation}
\gamma \in (0,1),\alpha =(\alpha _{1},\alpha _{2},...,\alpha _{N}),\quad
\alpha _{1}=1,\alpha _{k}\in (0,1],k=\overline{2,N}.  \label{1.1}
\end{equation}

Denote by $C^{\gamma \alpha }(R^{N})$ the space of continuous in
$R^{N}$ functions $u(x)$ with the finite norm

\begin{equation}
\left\Vert u\right\Vert _{C^{\gamma \alpha }(R^{N})}\equiv \left\vert
u\right\vert _{R^{N}}^{(\gamma \alpha )}=\left\vert u\right\vert
_{R^{N}}^{(0)}+\sum_{i=1}^{N}\left\langle u\right\rangle
_{x_{i},R^{N}}^{(\gamma \alpha _{i})},  \label{1.2}
\end{equation}
where

\begin{equation}
\left\vert u\right\vert _{R^{N}}^{(0)}=\sup_{x\in R^{N}}\left\vert
u(x)\right\vert ,\quad \left\langle u\right\rangle _{x_{i},R^{N}}^{(\gamma
\alpha _{i})}=\sup_{x\in R^{N},h>0}\frac{\left\vert
u(x_{1},...,x_{i}+h,...,x_{N})-u(x)\right\vert }{h^{\gamma \alpha _{i}}}.
\label{1.3}
\end{equation}


Along with the spaces $C^{\gamma \alpha }(R^{N})$ with the exponents
$\gamma \alpha _{i}<1$ we consider also spaces
$C^{\overline{l}}(R^{N})$, where
$\overline{l}=(l_{1},l_{2},...,l_{N})$, $l_{i}$ are arbitrary
positive non-integers. The norm in such spaces is defined by

\begin{equation}
\left\Vert u\right\Vert _{C^{\overline{l}}(R^{N})}\equiv \left\vert
u\right\vert _{R^{N}}^{(\overline{l})}=\left\vert u\right\vert
_{R^{N}}^{(0)}+\sum_{i=1}^{N}\left\langle u\right\rangle
_{x_{i},R^{N}}^{(l_{i})},  \label{1.3.01}
\end{equation}

\begin{equation}
\left\langle u\right\rangle _{x_{i},R^{N}}^{(l_{i})}=\sup_{x\in
R^{N},h>0}\frac{\left\vert
D_{x_{i}}^{[l_{i}]}u(x_{1},...,x_{i}+h,...,x_{N})-D_{x_{i}}^{[l_{i}]}u(x)%
\right\vert }{h^{l_{i}-[l_{i}]}},  \label{1.3.02}
\end{equation}
where $[l_{i}]$ is the integer part of the number $l_{i}$,
$D_{x_{i}}^{[l_{i}]}u$ is the derivative of order $[l_{i}]$ with
respect to the variable $x_{i}$ from a function $u$. Seminorm
(1.3.02) can be equivalently defined by (\cite{Triebel},\cite{Sol15}, \cite{Gol18} )

\begin{equation}
\left\langle u\right\rangle _{x_{i},R^{N}}^{(l_{i})}\simeq \sup_{x\in
R^{N},h>0}\frac{\left\vert \Delta _{h,x_{i}}^{k}u(x)\right\vert
}{h^{l_{i}}},\quad k>l_{i},  \label{1.3.03}
\end{equation}
where  $\Delta _{h,x_{i}}u(x)=u(x_{1},...,x_{i}+h,...,x_{N})-u(x)$
is the difference from a function $u(x)$ with respect to the
variable $x_{i}$ and with step $h$, $\Delta _{h,x_{i}}^{k}u(x)=$
$\Delta _{h,x_{i}}\left( \Delta _{h,x_{i}}^{k-1}u(x)\right) =\left(
\Delta _{h,x_{i}}\right) ^{k}u(x)$ is the difference of power $k$.
Note that functions from the space $C^{\overline{l}}(R^{N})$ have
also mixed derivatives up to definite orders and all derivatives are
H\"{o}lder continuous with respect to all variables with some
exponents in accordance with ratios between the exponents $l_{i}$.


Define further the space $\mathcal{H}^{\gamma \alpha }(R^{N})\subset
C^{\gamma \alpha }(R^{N})\cap L_{2}(R^{N})$ as the closure of
functions $u(x)$ from $C^{\gamma \alpha }(R^{N})$ with finite
supports in the norm

\begin{equation}
\left\Vert u\right\Vert _{\mathcal{H}^{\gamma \alpha }(R^{N})}\equiv
\left\Vert u\right\Vert _{L_{2}(R^{N})}+\sum_{i=1}^{N}\left\langle
u\right\rangle _{x_{i},R^{N}}^{(\gamma \alpha _{i})}.  \label{1.4}
\end{equation}

Analogously define the space $\mathcal{H}^{\overline{l}}(R^{N})$
with arbitrary positive non-integer $l_{i}$ with the norm

\begin{equation}
\left\Vert u\right\Vert _{\mathcal{H}^{\overline{l}}(R^{N})}\equiv
\left\Vert u\right\Vert _{L_{2}(R^{N})}+\sum_{i=1}^{N}\left\langle
u\right\rangle _{x_{i},R^{N}}^{(l_{i})}.  \label{1.4.01}
\end{equation}


It was shown in \cite{1} that $\left\vert u\right\vert
_{R^{N}}^{(0)}\leq С \left\Vert u\right\Vert _{\mathit{Н }^{\gamma
\alpha }(R^{N})}$ and so

\begin{equation}
\left\vert u\right\vert _{R^{N}}^{(\gamma \alpha )}\leq С \left\Vert
u\right\Vert _{\mathit{Н }^{\gamma \alpha }(R^{N})},  \label{1.5}
\end{equation}
where here and below we denote by  $C$, $\nu $ all absolute
constants or constant depending on fixed data only.

Let a function $\widetilde{m}(\xi )$, $\xi \in R^{N}$ be defined in
$R^{N}$ and let it be measurable and bounded. Define the operator
 $M:\mathcal{H}^{\gamma \alpha }(R^{N})\rightarrow
$ $L_{2}(R^{N})$ according to the formula

\begin{equation}
Mu\equiv \mathit{F}^{-1}(\widetilde{m}(\xi )\widetilde{u}(\xi )),
\label{1.6}
\end{equation}
where for a function $u(x)\in L_{1}(R^{N})$

\begin{equation}
\widetilde{u}(\xi )\equiv \mathit{F}(u)=\int\limits_{R^{N}}e^{-ix\xi
}u(x)dx \label{1.7.1.1}
\end{equation}
is Fourier transform of $u(x)$ and we extend Fourier transform on
the space $L_{2}(R^{N})$. Denote by
$\mathit{F}^{-1}\widetilde{u}(\xi )$ the inverse Fourier transform
of the function $\widetilde{u}(\xi )$.

Since $u(x)\in L_{2}(R^{N})$ and the function $\widetilde{m}(\xi )$
is bounded, the operator $M$ is well defined. We call the function
$\widetilde{m}(\xi )$ a Fourier multiplier.

Let the set of the variables $(\xi _{1},...,\xi _{N})=\xi $ is
represented as a union of $r$ subsets of length $N_{i}$,
$i=\overline{1,r} $ so that

\begin{equation*}
N_{1}+...+N_{r}=N,\quad \xi =(y_{1},...,y_{r}),\quad y_{1}=(\xi _{1},...,\xi
_{N_{1}}),...,y_{r}=(\xi _{N_{1}+...+N_{r-1}+1},...,\xi _{N}).
\end{equation*}

Let further $\omega _{i}$, $i=\overline{1,r}$, mean multi-indices of
length $N_{i}$

\begin{equation*}
\omega _{1}=(\omega _{1,1},...,\omega _{1,N_{1}}),...,\omega _{r}=(\omega
_{r,1},...,\omega _{r,N_{r}}),\quad \omega _{i,k}\in \mathbf{N\cup }\left\{
0\right\} ,
\end{equation*}
and thy symbol $D_{y_{i}}^{\omega _{i}}\widetilde{u}(\xi )$ means a
derivative of a function $\widetilde{u}(\xi )$ of order $\left\vert
\omega _{i}\right\vert =\omega _{i,1}+...+\omega _{i,N_{i}}$ with
respect to the group of variables $y_{i}=(\xi _{k_{1}},...,\xi
_{k_{N_{i}}})$, that is $D_{y_{i}}^{\omega _{i}}\widetilde{u}(\xi
)=D_{\xi _{k_{1}}}^{\omega _{i,1}}...D_{\xi _{k_{N_{i}}}}^{\omega
_{i,N_{i}}}\widetilde{u}(\xi )$. Let also $p\in (1,2]$ and positive
integers $s_{i}$, $i=\overline{1,r}$, satisfy the inequalities

\begin{equation}
s_{i}>\frac{N_{i}}{p},\quad i=\overline{1,r}.  \label{1.7}
\end{equation}

Denote for $\nu >0$

\begin{equation*}
B_{\nu }=\left\{ \xi \in R^{N}:\nu \leq \left\vert \xi \right\vert \leq \nu
^{-1}\right\} .
\end{equation*}

Suppose that for some $\nu >0$ the function $\widetilde{m}(\xi )$
satisfies with some $\mu
>0$ uniformly in $\lambda >0$ the condition

\begin{equation}
\sum_{\left\vert \omega _{i}\right\vert \leq s_{i}}\left\Vert
D_{y_{1}}^{\omega _{1}}D_{y_{2}}^{\omega _{2}}...D_{y_{r}}^{\omega
_{r}}\widetilde{m}(\lambda ^{\frac{1}{\alpha _{1}}}\xi _{1},...,\lambda
^{\frac{1}{\alpha _{N}}}\xi _{N})\right\Vert _{L_{p}(B_{\nu })}\leq \mu .  \label{1.8}
\end{equation}

\begin{theorem}
\label{T1.1} ( \cite{1} : T.2.1, L.2.1, L.2.2, T.2.2, T.2.3)

If the function $\widetilde{m}(\xi )$ satisfies condition
 \eqref{1.8} then the operator $M$, which is defined in
 \eqref{1.6}, is a linear bounded operator from the space
 $\mathcal{H}^{\gamma \alpha }(R^{N})$ to itself and

\begin{equation}
\left\Vert Mu\right\Vert _{\mathcal{H} ^{\gamma \alpha }(R^{N})}\leq
C(N,\gamma ,\alpha ,p,\nu ,s_{i})\mu \left\Vert u\right\Vert _{\mathcal{H}
^{\gamma \alpha }(R^{N})}.  \label{1.9}
\end{equation}
\end{theorem}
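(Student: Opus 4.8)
The strategy is the classical Littlewood--Paley approach used in \cite{1} and \cite{Herm}, adapted to the anisotropic homogeneity encoded by $\alpha$ in condition \eqref{1.8}. First I would decompose the multiplier dyadically with respect to the anisotropic dilations $\delta_\lambda\xi=(\lambda^{1/\alpha_1}\xi_1,\dots,\lambda^{1/\alpha_N}\xi_N)$: choose a smooth partition of unity $1=\sum_{j\in\mathbb{Z}}\varphi_j(\xi)$ subordinate to the anisotropic annuli $B_\nu$ rescaled by $2^j$, so that $\widetilde m=\sum_j \widetilde m_j$ with $\widetilde m_j=\widetilde m\varphi_j$ supported in a fixed annulus after rescaling. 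Correspondingly $Mu=\sum_j M_j u$ where $M_j$ has symbol $\widetilde m_j$. The point of hypothesis \eqref{1.8} is that each rescaled piece $\widetilde m_j(\delta_{2^j}\cdot)$ has $L_p(B_\nu)$-bounds on its mixed derivatives $D_{y_1}^{\omega_1}\cdots D_{y_r}^{\omega_r}$ up to orders $s_i$ that are uniform in $j$, with constant $\mu$.

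The core estimate is a pointwise bound on the convolution kernel $K_j=\mathit F^{-1}\widetilde m_j$ and its behaviour under the group-wise differences. Because $s_i>N_i/p$, a Sobolev-type embedding in each variable group $y_i$ (Hausdorff--Young together with $W^{s_i}_p\hookrightarrow$ a space controlling weighted $L_1$ of the kernel in the $x$-variables dual to $y_i$) gives that the rescaled kernel lies in $L_1$ with a weight $\prod_i(1+|x_{(i)}|)^{\sigma_i}$ for some $\sigma_i>0$, uniformly in $j$; undoing the anisotropic rescaling then yields, for each $i$, estimates of the form $\|K_j\|_{L_1}\le C\mu$ and $\|\,|x_i|^{\gamma\alpha_i}K_j\|_{L_1}\le C\mu\,2^{-j\gamma}$ (schematically — the exponent $2^{-j}$ reflecting one anisotropic scale), plus the zero-moment-type cancellation $\int K_j\,dx=0$ for $j$ large coming from $\varphi_j$ vanishing near the relevant part of the frequency space. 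These are exactly the ingredients of Lemmas 2.1--2.2 of \cite{1}.

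From the kernel bounds one estimates $\langle M_j u\rangle^{(\gamma\alpha_i)}_{x_i}$ and $\|M_j u\|_{L_2}$. For the $L_2$ part one uses Plancherel and $\sum_j|\varphi_j|\lesssim1$. For the seminorm in $x_i$ one splits the sum over $j$ at the scale where $h$ matches $2^{-j}$: for the ``low'' part ($2^{-j}\gtrsim h$, i.e.\ coarse frequencies) one bounds the increment $\Delta_{h,x_i}M_j u$ using the kernel cancellation and the weighted-$L_1$ bound to gain a factor $h^{\gamma\alpha_i}2^{j\gamma}$ against the given seminorm $\langle u\rangle^{(\gamma\alpha_i)}_{x_i}$; for the ``high'' part one simply uses $\|M_j u\|_\infty\lesssim \|K_j\|_{L_1}\langle u\rangle^{(\gamma\alpha_i)}_{x_i}2^{-j\gamma}$ (using a telescoping/cancellation of $u$ rather than of the kernel, valid since $u$ has the partial H\"older property in $x_i$), and then $|\Delta_{h,x_i}M_j u|\le 2\|M_j u\|_\infty$. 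Both geometric series in $j$ converge and sum to $C\mu\, h^{\gamma\alpha_i}\langle u\rangle^{(\gamma\alpha_i)}_{x_i}$, which after dividing by $h^{\gamma\alpha_i}$ and summing over $i$ gives \eqref{1.9}; the bound $|u|^{(0)}\le C\|u\|_{\mathcal H^{\gamma\alpha}}$ from \eqref{1.5} is what lets us pass from the seminorm control to the full norm.

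The main obstacle is the interplay between the \emph{group} structure of the differentiation in \eqref{1.8} (only mixed derivatives $D_{y_1}^{\omega_1}\cdots D_{y_r}^{\omega_r}$ with each block differentiated at most $s_i$ times, not all mixed partials of high total order) and the \emph{individual-variable} H\"older seminorms in \eqref{1.2}: one must verify that the group-wise Sobolev embeddings $s_i>N_i/p$ are genuinely enough to produce kernel estimates in each single direction $x_i$ separately, and that the anisotropic rescaling $\delta_\lambda$ distributes the gained decay correctly across the blocks. This is precisely where Theorem 2.1 and Lemmas 2.1, 2.2 of \cite{1} are invoked; since the statement here is quoted as a direct consequence of those results, the proof amounts to checking that condition \eqref{1.8} implies the hypotheses of the cited theorems after the dyadic anisotropic decomposition, and then assembling the series as above.
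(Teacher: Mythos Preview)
The paper does not actually give its own proof of Theorem~\ref{T1.1}: the statement is quoted from \cite{1} (Theorem~2.1, Lemmas~2.1--2.2, Theorems~2.2--2.3 there), and the only comment is that the general grouped case is ``completely analogous'' to the proofs of Lemmas~\ref{L2.1}--\ref{L2.3} below. Your sketch is correct and is precisely the Littlewood--Paley scheme of \cite{1}, which the paper reproduces in detail for the closely related Theorem~\ref{T2.1}: anisotropic dyadic decomposition $\widetilde u=\sum_j\widetilde u_j$ via $\widetilde\Phi_j$, Hausdorff--Young together with the group-wise Sobolev condition $s_i>N_i/p$ to obtain weighted $L_1$ bounds on the rescaled kernels $n_j$ (this is exactly the content of Lemmas~\ref{L2.1}--\ref{L2.3}), the cancellation $\int\theta_j\,dz=\widetilde\theta_j(0)=0$ (note this holds for \emph{all} $j$, since $\widetilde\Phi(0)=\varphi(0)=0$, not only for large $j$ as you write), and the high/low split of $|v(x)-v(y)|$ at $n_0=-\log_2\rho(x-y)$ with the mean value theorem on the low-frequency side and the uniform bound $|v_j|\lesssim\mu\langle u\rangle\,2^{-\gamma j}$ on the high-frequency side.
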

Note that very often condition \eqref{1.8} can be easily verified in
applications to differential equations. It is the case when the
function $\widetilde{m}(\xi )$ is anisotropic homogeneous of degree
zero, that is when  $\widetilde{m}(\lambda ^{\frac{1}{\alpha
_{1}}}\xi _{1},...,\lambda ^{\frac{1}{\alpha _{N}}}\xi
_{N})=\widetilde{m}(\xi )$. Note also that condition \eqref{1.8}
contains derivatives of the function $\widetilde{m}(\xi )$ wit
respect to variables $y_{i}$ only up to the order $s_{i}$. The case
 $r=1$, $N_{1}=N$, $p=2$ is contained in Lemma 2.1 in \cite{1} and
 Lemma 2.2 in \cite{1} contains the case $r=N$, $N_{i}=1$, $s_{i}=1$ .
The general case completely analogous (see the proofs of lemmas
\ref{L2.1}- \ref{L2.3} below).


The further content of the paper is as follows. In Section \ref{s2}
we prove a theorem about Fourier multipliers in spaces of functions
with H\"{o}lder condition with respect to only a part of
independent variables. In this section we also give comparatively
simple sufficient conditions for the theorem. As a conclusion of the
section we demonstrate applications of the theorem about Fourier
multipliers by two very simple but interesting in our opinion
examples for the Laplace equation and for the heat equation. In
section \ref{s3}, we apply the results of Section \ref{s2} to a
sketch of an investigation of model problems for a linearized
Cahn-Hilliard equation with dynamic boundary conditions of two
types.

More detailed investigation of initial-boundary value problems for
Cahn-Hilliard equation with dynamic boundary conditions will be
given in a forthcoming paper.


\section{Theorems about Fourier multipliers in spaces of functions with H\"{o}lder condition
with respect to a part of their variables }

\label{s2}

In this section we prove a theorem about Fourier multipliers, which
is a generalization of Theorem 2.1 from \cite{1}. The schema of the
proof is a modification of corresponding schemas from  \cite{1},
\cite{Herm}.

Define an anisotropic "distance" $\rho $\ in space $R^{N}$ between
points $x$ and $y$ according to the formula

\begin{equation}
\rho (x-y)=\sum\limits_{k=1}^{N-2}\left\vert x_{k}-y_{k}\right\vert
+\left\vert x_{N-1}-y_{N-1}\right\vert ^{\alpha }+\left\vert
x_{N}-y_{N}\right\vert ^{\beta },\quad \alpha \in (0,1],\beta \gtrdot 0.
\label{A.0}
\end{equation}

Let us stress that the exponent $\beta \gtrdot 0$ is an arbitrary
great positive number.

Choose a function $\omega (\rho ):[0,+\infty )\rightarrow \lbrack
0,1]$ from the class  $С ^{\infty }$ such that $\omega \equiv 1$ on the
interval $[1/2,2]$ and $\omega \equiv 0$ on the set $[0,1/4]\cup
\lbrack 4,+\infty )$. Denote

\begin{equation*}
\chi :R^{N}\rightarrow \lbrack 0,1],\quad \chi (\xi )\equiv \omega (\rho
(\xi )),\quad \xi \in R^{N}.
\end{equation*}

Let a function  $\widetilde{m}(\xi )\in C(R^{N}\backslash \{0\})$ be
bounded. For $x\in R^{N}$  and for an integer $j\in Z$ denote

\begin{equation}
A_{j}x\equiv (2^{j}x_{1},...,2^{j}x_{N-2},2^{\frac{j}{\alpha
}}x_{N-1},2^{\frac{j}{\beta }}x_{N}),\quad a_{j}=\det A_{j}=2^{j(N-2)+\frac{j}{\alpha
}+\frac{j}{\beta }}.  \label{A.8.1}
\end{equation}

Denote further $\widetilde{m}_{j}(\xi )=\widetilde{m}(\xi )\chi
(A_{j}^{-1}\xi )$, denote by $m_{j}(x)$ the inverse Fourier
transform of the function  $\widetilde{m}_{j}(\xi )$, and denote

\begin{equation}
n_{j}(x)=a_{j}^{-1}m_{j}(A_{j}^{-1}x).  \label{A.8.2}
\end{equation}

For convenience we also denote for $x\in R^{N}$ the variables
$x^{\prime }=(x_{1},...,x_{N-2},x_{N-1})$, $x^{\prime \prime
}=(x_{1},...,x_{N-2})$.

Let with some $\mu >0$ the following conditions are satisfied

\begin{equation}
\widetilde{m}(\xi )|_{\xi ^{\prime }=0}=\widetilde{m}(0,...,0,\xi
_{N})\equiv 0,\quad \xi _{N}\in R^{1},  \label{A.01}
\end{equation}

\begin{equation}
\int\limits_{R^{N}}(1+|x^{\prime \prime }|^{\gamma }+|x_{N-1}|^{\alpha
\gamma })|n_{j}(x)|dx\leq \mu ,\quad j\in Z.  \label{A.02}
\end{equation}

Let finally we have a function  $u(x)\in C_{x^{\prime \prime
},x_{N-1}}^{\gamma ,\alpha \gamma }(R^{N})$ with finite support,
that is particularly

\begin{equation*}
\left\langle u\right\rangle _{x^{\prime \prime },R^{N}}^{(\gamma
)}+\left\langle u\right\rangle _{x_{N-1},R^{N}}^{(\alpha \gamma )}<\infty .
\end{equation*}

Denote

\begin{equation*}
v(x)=m(x)\ast u(x)\equiv F^{-1}(\widetilde{m}(\xi )\widetilde{u}(\xi )).
\end{equation*}

\begin{theorem}
\label{T2.1}

Under conditions  \eqref{A.01}, \eqref{A.02} the function $v(x)$
satisfies H\"{o}lder conditions with respect to all variables
and the following estimate is valid

\begin{equation}
\left\langle v\right\rangle _{x^{\prime \prime },R^{N}}^{(\gamma
)}+\left\langle v\right\rangle _{x_{N-1},R^{N}}^{(\alpha \gamma
)}+\left\langle v\right\rangle _{x_{N},R^{N}}^{(\beta \gamma )}\leq
C\mu \left( \left\langle u\right\rangle _{x^{\prime \prime
},R^{N}}^{(\gamma )}+\left\langle u\right\rangle
_{x_{N-1},R^{N}}^{(\alpha \gamma )}\right) . \label{A.03}
\end{equation}
\end{theorem}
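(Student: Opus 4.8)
The plan is to follow the Littlewood--Paley scheme of \cite{1} and \cite{Herm}, adapted to the fact that the input function $u$ carries H\"older smoothness only in the variables $x''$ and $x_{N-1}$, while we want to control smoothness in all variables including $x_N$. First I would decompose $\widetilde m(\xi)=\sum_{j\in Z}\widetilde m_j(\xi)$ via the anisotropic dyadic partition $\chi(A_j^{-1}\xi)$, so that $v=\sum_j m_j\ast u$. By the rescaling $n_j(x)=a_j^{-1}m_j(A_j^{-1}x)$ introduced in \eqref{A.8.2} and the substitution $y\mapsto A_j y$ inside the convolution, one reduces each piece $m_j\ast u$ to a convolution of $u$ against the dilated kernel $a_j^{-1}n_j(A_j^{-1}\cdot)$; the point of condition \eqref{A.02} is that $n_j$ and its weighted moments are bounded uniformly in $j$, which is exactly what a multiplier-type hypothesis should give after the scaling is absorbed.

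The key device is the vanishing condition \eqref{A.01}, $\widetilde m(0,\dots,0,\xi_N)\equiv 0$: it implies that $\int_{R^N} n_j(x)\,dx''\,dx_{N-1}=0$ for each fixed $x_N$ (integrating the kernel in the $x''$ and $x_{N-1}$ variables recovers the restriction of the symbol to $\xi'=0$), so each $m_j\ast u$ can be rewritten using differences of $u$ rather than $u$ itself. Concretely, for the seminorms $\langle v\rangle^{(\gamma)}_{x''}$ and $\langle v\rangle^{(\alpha\gamma)}_{x_{N-1}}$ I would estimate a finite difference $\Delta_{h} v$ by splitting the sum over $j$ at the scale where $2^j$ (resp.\ $2^{j/\alpha}$) balances the step $h$: for the ``high'' frequencies ($2^j$ large relative to $1/h$) use the zero-average property and the moment bound \eqref{A.02} on $n_j$ to gain a factor $h^{\gamma}$ times $\langle u\rangle^{(\gamma)}_{x''}$ (and similarly $h^{\alpha\gamma}\langle u\rangle^{(\alpha\gamma)}_{x_{N-1}}$), and for the ``low'' frequencies estimate the difference of $v$ directly by the difference of $u$, again against the $L_1$ bound on $n_j$. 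Summing the two geometric series in $j$ yields the $h^{\gamma}$ (resp.\ $h^{\alpha\gamma}$) bound with constant $C\mu$ times the H\"older seminorms of $u$. This is the standard Ladyzhenskaya/H\"ormander telescoping argument and goes through for these two groups of variables essentially verbatim.

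The genuinely new and hardest part is the estimate of $\langle v\rangle^{(\beta\gamma)}_{x_N}$, the H\"older seminorm in the variable in which $u$ has \emph{no} assumed smoothness. Here one cannot difference $u$ in $x_N$. Instead I would difference $v$ in $x_N$ by differencing the kernels: $\Delta_{h,x_N}(m_j\ast u)=(\Delta_{h,x_N}m_j)\ast u$, and then exploit the anisotropic scaling $A_j$, under which a step $h$ in $x_N$ corresponds to a step $2^{-j/\beta}h$ in the rescaled kernel $n_j$. Since $\beta$ is allowed to be arbitrarily large, the dyadic blocks decay fast enough in the relevant direction: for $2^{j/\beta}$ large compared to $1/h$ one bounds $\|\Delta_{h,x_N}n_j\|$ by $2\|n_j\|$ and picks up decay from the mismatch of scales, while for $2^{j/\beta}$ small one uses the smoothness of $n_j$ in $x_N$ encoded in the finiteness of the moment $\int|x_N||n_j|$ — which, although not literally written in \eqref{A.02}, follows because $\widetilde m_j$ is supported where $\rho(A_j^{-1}\xi)\sim1$, forcing $\xi_N$ to be of size $\sim 2^{j/\beta}$ and hence $\partial_{\xi_N}$ costs only $2^{-j/\beta}$. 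One must still feed in the vanishing condition \eqref{A.01} to replace $u$ by its $x''$-difference so that $\langle u\rangle^{(\gamma)}_{x''}$ (or $\langle u\rangle^{(\alpha\gamma)}_{x_{N-1}}$) appears on the right-hand side rather than $\|u\|_\infty$; balancing the two regimes at $2^{j/\beta}\sim1/h$ and summing produces exactly the $h^{\beta\gamma}$ power. Collecting the three seminorm bounds gives \eqref{A.03}. The main obstacle, and the step that needs the most care, is thus making the $x_N$-difference estimate honest: verifying that the kernel $n_j$ really has the claimed decay and one-derivative smoothness in $x_N$ uniformly in $j$ from the stated hypotheses, and checking that the geometric series in $j$ converge for \emph{every} large $\beta$, i.e.\ that $\beta\gamma<$ whatever integer order of smoothness the kernels supply (which is why \eqref{1.3.03} and the $C^{\overline l}$ formalism with higher-order differences are set up in the introduction).
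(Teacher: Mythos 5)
Your overall architecture matches the paper's: anisotropic Littlewood--Paley decomposition, the cancellation $\int_{R^{N-1}}(\cdot)\,dz'=0$ coming from \eqref{A.01}, the weighted $L_1$ bounds from \eqref{A.02}, and the split of the dyadic sum at the scale of $\rho(x-y)$. For the seminorms in $x''$ and $x_{N-1}$ your argument is the paper's argument. The gap is in the $x_N$ direction, at exactly the step you flag as needing the most care, and your proposed repair does not work as stated. You want to difference (or differentiate) the kernel $n_j$ in $x_N$ in the low-frequency regime, and you justify the needed decay/smoothness of $n_j$ in $x_N$ by saying that $\widetilde m_j$ is supported where $\rho(A_j^{-1}\xi)\sim 1$, so that ``$\partial_{\xi_N}$ costs only $2^{-j/\beta}$''. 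But $\widetilde m$ is only assumed continuous and bounded: $\widetilde n_j(\xi)=\widetilde m(A_j\xi)\chi(\xi)$ need not be differentiable in $\xi_N$ at all, and compact support of $\widetilde n_j$ in $\xi$ gives no pointwise decay of $n_j$ in $x$. Condition \eqref{A.02} contains no weight in $x_N$ and no derivative, so $\int |x_N|\,|n_j(x)|\,dx$ is simply not controlled by the hypotheses; also, $\sum_j \chi(A_j^{-1}\xi)$ is not a partition of unity (it is $\omega\circ\rho$ summed over dilations, bounded but not $\equiv 1$), so $v=\sum_j m_j\ast u$ with $\widetilde m_j=\widetilde m\,\chi(A_j^{-1}\xi)$ is not even the right identity.

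The paper's device that closes this hole is to run the true partition of unity $\widetilde\Phi_j(\xi)=\varphi(\rho(\xi)/2^j)$ alongside the fattened cutoff $\chi_j$, using $\widetilde\Phi_j\chi_j=\widetilde\Phi_j$ to write $v_j=u\ast\Phi_j\ast m_j$ and, after rescaling, $v_j(x)=\int u(x-A_j^{-1}z)\,\theta_j(z)\,dz$ with $\theta_j=n_j\ast\Phi$ and $\Phi$ a fixed Schwartz function. Every $x_N$-derivative then lands on $\Phi$, not on $n_j$: $D_{x_N}^k v_j=2^{kj/\beta}\int u(x-A_j^{-1}z)\,(n_j\ast D_{z_N}^k\Phi)(z)\,dz$, and $n_j\ast D^k\Phi$ inherits the weighted $L_1$ bounds of \eqref{A.02} because $|z''|^\gamma\le C(|y''|^\gamma+|z''-y''|^\gamma)$ and $D^k\Phi$ has all moments. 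Combined with the cancellation \eqref{A.01} this gives $|D_{x_N}^k v_j|\le C\mu\langle u\rangle 2^{kj/\beta-\gamma j}$ for every $k$, so the low-frequency sum converges once one measures the $x_N$-smoothness by a $k$-th order difference with $k>\beta\gamma$, via \eqref{1.3.03} --- the higher-order-difference mechanism you correctly anticipated, but which must be fed by derivatives of the auxiliary kernel $\Phi$ rather than by nonexistent smoothness of $n_j$. With that substitution your outline becomes the paper's proof.
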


\begin{proof}
We will try to retain the notation of \cite{1} where it is possible.

Let $\psi \in C^{\infty }([0,\infty ))$, $0\leq \psi \leq 1$, $\psi
\equiv 1$ on $[0,1]$ \ and $\psi \equiv 0$ on $[2,\infty )$. \
Denote $\varphi (\rho )=\psi (\rho )-\psi (2\rho )$ for $\rho \in
\lbrack 0,\infty )$. The function $\varphi (\rho )$ possess the
properties: $\varphi (\rho )\equiv 0$ on $[0,1/2]$ and $\varphi
(\rho )\equiv 0$ on $[2,\infty )$. Denote further

\begin{equation*}
\varphi _{j}(\rho )=\varphi (\frac{\rho }{2^{j}}),\quad \varphi
_{j}:[0,\infty )\rightarrow \lbrack 0,1],\quad j\in Z.
\end{equation*}

This set of the functions satisfies by the definition

\begin{equation}
\sum\limits_{j=-\infty }^{\infty }\varphi _{j}(\rho )=1,\quad \rho \in
(0,\infty ).  \label{A.1}
\end{equation}

Define functions  $\widetilde{\Phi }$ and $\widetilde{\Phi
}_{j}:R^{N}\rightarrow \lbrack 0,1]$ \ according to the formulas
($\rho $ is from  \eqref{A.0})

\begin{equation}
\widetilde{\Phi }(\xi )\equiv \varphi \circ \rho (\xi )=\varphi (\rho (\xi
)),  \label{A.2}
\end{equation}

\begin{equation}
\widetilde{\Phi }_{j}(\xi )\equiv \varphi _{j}\circ \rho (\xi )=\varphi
_{j}(\rho (\xi ))=\varphi \left( \frac{\rho (\xi )}{2^{j}}\right) =.
\label{A.3}
\end{equation}

\begin{equation}
=\varphi \left( \rho \left( \frac{\xi ^{\prime \prime }}{2^{j}},\frac{\xi
_{N-1}}{2^{j/\alpha }},\frac{\xi _{N}}{2^{j/\beta }}\right) \right)
=\widetilde{\Phi }\left( \frac{\xi ^{\prime \prime }}{2^{j}},\frac{\xi
_{N-1}}{2^{j/\alpha }},\frac{\xi _{N}}{2^{j/\beta }}\right) .  \label{A.4}
\end{equation}

By the definition of the functions $\widetilde{\Phi }_{j}$ and in
view of  \eqref{A.1}

\begin{equation}
\sum\limits_{j=-\infty }^{\infty }\widetilde{\Phi }_{j}(\xi )\equiv 1,\quad
\xi \in R^{N}\backslash \{0\}.  \label{A.5}
\end{equation}

We use this equality to represent the function  $\widetilde{u}(\xi
)=F(u(x))$ as

\begin{equation}
\widetilde{u}(\xi )=\sum\limits_{j=-\infty }^{\infty }\widetilde{u}_{j}(\xi
),\quad \widetilde{u}_{j}(\xi )=\widetilde{u}(\xi )\widetilde{\Phi }_{j}(\xi
),\quad \xi \in R^{N}\backslash \{0\}.  \label{A.6}
\end{equation}

Denote also

\begin{equation}
\Phi (x)=F^{-1}(\widetilde{\Phi }(\xi )),\quad \Phi
_{j}(x)=F^{-1}(\widetilde{\Phi }_{j}(\xi )).  \label{A.7}
\end{equation}

In view of \eqref{A.4}

\begin{equation*}
\Phi _{j}(x)=F^{-1}(\widetilde{\Phi }_{j}(\xi ))=\int\limits_{R^{N}}e^{ix\xi
}\widetilde{\Phi }\left( \frac{\xi ^{\prime \prime }}{2^{j}},\frac{\xi
_{N-1}}{2^{j/\alpha }},\frac{\xi _{N}}{2^{j/\beta }}\right) d\xi .
\end{equation*}

Making in the last integral the change of the variables  $\xi
^{\prime \prime }=2^{j}\eta ^{\prime \prime }$, $\xi
_{N-1}=2^{j/\alpha }\eta _{N-1}$, $\xi _{N}=2^{j/\beta }\eta _{N}$,
we obtain

\begin{equation}
\Phi _{j}(x)=2^{(N-2)j+\frac{j}{\alpha }+\frac{j}{\beta }}\Phi \left(
2^{j}x^{\prime \prime },2^{j/\alpha }x_{N-1},2^{j/\beta }x_{N}\right)
=a_{j}\Phi (A_{j}x).  \label{A.8}
\end{equation}

In view of the above definition of the function $\chi (\xi ) $ and
in view of the definition of the functions  $\widetilde{\Phi }(\xi
)$ и $\widetilde{\Phi }_{j}(\xi )$ we have for all $\xi \in R^{N}$

\begin{equation*}
\widetilde{\Phi }(\xi )=\widetilde{\Phi }(\xi )\chi (\xi ),\quad
\widetilde{\Phi }_{j}(\xi )=\widetilde{\Phi }_{j}(\xi )\chi \left( \frac{\xi ^{\prime
\prime }}{2^{j}},\frac{\xi _{N-1}}{2^{j/\alpha }},\frac{\xi _{N}}{2^{j/\beta
}}\right) .
\end{equation*}

Consequently,

\begin{equation}
\widetilde{u}_{j}(\xi )=\widetilde{u}(\xi )\widetilde{\Phi }_{j}(\xi
)=\widetilde{u}_{j}(\xi )\chi \left( \frac{\xi ^{\prime \prime
}}{2^{j}},\frac{\xi _{N-1}}{2^{j/\alpha }},\frac{\xi _{N}}{2^{j/\beta }}\right) \equiv
\widetilde{u}_{j}(\xi )\chi _{j}(\xi ),  \label{A.9}
\end{equation}

where

\begin{equation}
\chi _{j}(\xi )\equiv \chi \left( \frac{\xi ^{\prime \prime
}}{2^{j}},\frac{\xi _{N-1}}{2^{j/\alpha }},\frac{\xi _{N}}{2^{j/\beta }}\right) .
\label{A.10}
\end{equation}

Denote in the cense of distributions

\begin{equation}
v(x)=m(x)\ast u(x),  \label{A.11}
\end{equation}

that is

\begin{equation*}
\widetilde{v}(\xi )=\widetilde{m}(\xi )\widetilde{u}(\xi
)=\sum\limits_{j=-\infty }^{\infty }\widetilde{m}(\xi )\widetilde{u}_{j}(\xi
)=
\end{equation*}

\begin{equation}
=\sum\limits_{j=-\infty }^{\infty }\widetilde{m}(\xi )\chi _{j}(\xi
)\widetilde{u}_{j}(\xi )\equiv \sum\limits_{j=-\infty }^{\infty
}\widetilde{m}_{j}(\xi )\widetilde{u}_{j}(\xi )\equiv \sum\limits_{j=-\infty }^{\infty
}\widetilde{v}_{j}(\xi ),  \label{A.12}
\end{equation}

where

\begin{equation}
\widetilde{m}_{j}(\xi )\equiv \widetilde{m}(\xi )\chi _{j}(\xi ),\quad
\widetilde{v}_{j}(\xi )=\widetilde{m}_{j}(\xi )\widetilde{u}_{j}(\xi ).
\label{A.12+1}
\end{equation}

Consider the function $v_{j}(x)=F^{-1}(\widetilde{v}_{j}(\xi ))$.
Using it's definition and \eqref{A.8.1}, we represent this function
in the form

\begin{equation*}
v_{j}(x)=u(x)\ast \Phi _{j}(x)\ast
m_{j}(x)=\int\limits_{R^{N}}u(x-y)dy\int\limits_{R^{N}}m_{j}(y-z)\Phi
_{j}(z)dz=
\end{equation*}

\begin{equation}
=\int\limits_{R^{N}}u(x-y)dy\int\limits_{R^{N}}m_{j}(y-z)a_{j}\Phi
(A_{j}z)dz.  \label{A.12+1.1}
\end{equation}

Making in the last integral the change of the variables $k=A_{j}z$,
$dk=a_{j}dz$, we obtain

\begin{equation*}
v_{j}(x)=\int\limits_{R^{N}}u(x-y)dy\int\limits_{R^{N}}m_{j}(y-A_{j}^{-1}k)%
\Phi (k)dk.
\end{equation*}

Making now  the change of the variables $y=A_{j}^{-1}z$, we arrive
at the expression

\begin{equation}
v_{j}(x)=\int\limits_{R^{N}}u(x-A_{j}^{-1}z)dz\int\limits_{R^{N}}\left[
a_{j}^{-1}m_{j}(A_{j}^{-1}(z-k))\right] \Phi (k)dk\equiv
\int\limits_{R^{N}}u(x-A_{j}^{-1}z)\theta _{j}(z)dz,  \label{A.12+2}
\end{equation}

where

\begin{equation}
\theta _{j}(z)=n_{j}(z)\ast \Phi (z)=\int\limits_{R^{N}}n_{j}(z-k)\Phi (k)dk,
\label{A.15}
\end{equation}

and the function $n_{j}(z)$ is defined in \eqref{A.8.2}.

Calculate now the derivatives $v_{x_{i}}$, $i=1,2,...,N$. For this
we use properties of a convolution and analogously \eqref{A.12+1.1}
represent $v_{j}(x)$ as

\begin{equation*}
v_{j}(x)=\int\limits_{R^{N}}a_{j}\Phi
(A_{j}(x-y))dy\int\limits_{R^{N}}u(y-z)m_{j}(z)dz.
\end{equation*}

Let first $i=N-1$. Then

\begin{equation*}
\left( v_{j}(x)\right) _{x_{N-1}}=2^{\frac{j}{\alpha
}}\int\limits_{R^{N}}a_{j}\Phi
^{(i)}(A_{j}(x-y))dy\int\limits_{R^{N}}u(y-z)m_{j}(z)dz,
\end{equation*}

where

\begin{equation}
\Phi ^{(i)}(z)\equiv \frac{\partial \Phi }{\partial z_{i}}(z),\quad
i=1,2,...,N.  \label{A.16.1}
\end{equation}

Using properties of a convolution and making a series of changes of
variables with the matrices  $A_{j}$ and $A_{j}^{-1}$, we obtain
completely analogously to \eqref{A.12+2}

\begin{equation}
\left( v_{j}(x)\right) _{x_{N-1}}=2^{\frac{j}{\alpha
}}\int\limits_{R^{N}}u(x-A_{j}^{-1}z)\left( \theta _{j}\right) _{z_{N-1}}(z)dz,
\label{A.17}
\end{equation}

where

\begin{equation*}
\left( \theta _{j}\right) _{z_{N-1}}(z)=\int\limits_{R^{N}}n_{j}(z-k)\Phi
_{k_{N-1}}(k)dk.
\end{equation*}

Completely analogously for $1\leq i\leq N-2$

\begin{equation}
\left( v_{j}(x)\right)
_{x_{i}}=2^{j}\int\limits_{R^{N}}u(x-A_{j}^{-1}z)\left( \theta _{j}\right)
_{z_{i}}(z)dz,  \label{A.18}
\end{equation}

and for $i=N$

\begin{equation*}
\left( v_{j}(x)\right) _{x_{N}}=2^{\frac{j}{\beta
}}\int\limits_{R^{N}}u(x-A_{j}^{-1}z)\left( \theta _{j}\right) _{z_{N}}(z)dz,
\end{equation*}

and also in more general case

\begin{equation*}
D_{x_{N}}^{k}\left( v_{j}(x)\right) =2^{\frac{kj}{\beta
}}\int\limits_{R^{N}}u(x-A_{j}^{-1}z)D_{z_{N}}^{k}\left( \theta _{j}\right)
(z)dz,\quad k=0,1,2....
\end{equation*}

Now note that for almost all $z_{N}$

\begin{equation*}
f_{j}(z_{N})\equiv \int\limits_{R^{N-1}}\theta _{j}(z^{\prime
},z_{N})dz^{\prime }\equiv 0,\quad f_{j}^{(i)}(z_{N})\equiv
\int\limits_{R^{N-1}}\left( \theta _{j}\right) _{z_{i}}(z^{\prime
},z_{N})dz^{\prime }\equiv 0,\quad
\end{equation*}

\begin{equation}
\int\limits_{R^{N-1}}D_{z_{N}}^{k}\left( \theta _{j}\right) (z^{\prime
},z_{N})dz^{\prime }\equiv 0.  \label{A.19}
\end{equation}

We show the first of these relations as the second and the third are
completely similar. It suffices to show that the Fourier transform
of $f_{j}(z_{N})$ with respect to $z_{N}$ is identically equal to
zero

\begin{equation*}
F_{N}(f_{j})=\widetilde{f}_{j}(\xi _{N})=\int\limits_{R^{1}}e^{-iz_{N}\xi
}dz_{N}\int\limits_{R^{N-1}}\theta _{j}(z^{\prime },z_{N})dz^{\prime }=
\end{equation*}

\begin{equation*}
=\int\limits_{R^{N-1}}dz^{\prime }\int\limits_{R^{1}}e^{-iz_{N}\xi }\theta
_{j}(z^{\prime },z_{N})dz_{N}=\int\limits_{R^{N-1}}F_{N}(\theta
_{j})(z^{\prime },\xi _{N})dz^{\prime }.
\end{equation*}

Since the integral with respect to $z^{\prime } $ of the function
$F_{N}(\theta _{j})(z^{\prime },\xi _{N})$ is equal to the value at
$\xi ^{\prime }=0$ of it's Fourier transform with respect to the
same variables $z^{\prime }$, then

\begin{equation*}
\int\limits_{R^{N-1}}F_{N}(\theta _{j})(z^{\prime },\xi _{N})dz^{\prime
}=\left. \left[ \int\limits_{R^{N-1}}e^{-iz^{\prime }\xi ^{\prime
}}F_{N}(\theta _{j})(z^{\prime },\xi _{N})dz^{\prime }\right] \right\vert
_{\xi ^{\prime }=0}=
\end{equation*}

\begin{equation*}
=\left. \widetilde{\theta }_{j}(\xi ^{\prime },\xi _{N})\right\vert _{\xi
^{\prime }=0}=\widetilde{\theta }_{j}(0,\xi _{N})\equiv 0.
\end{equation*}
The last identity follows from the fact that

\begin{equation}
\widetilde{n}_{j}(\xi )=\int\limits_{R^{N}}e^{-ix\xi
}a_{j}^{-1}m_{j}(A_{j}^{-1}x)dx=\widetilde{m}_{j}(A_{j}\xi ),  \label{A.20}
\end{equation}

and consequently

\begin{equation*}
\widetilde{\theta }_{j}(\xi )=\widetilde{n}_{j}(\xi )\widetilde{\Phi
}(\xi )=\widetilde{m}_{j}(A_{j}\xi )\widetilde{\Phi }(\xi
)=\widetilde{m}(A_{j}\xi )\chi _{j}(\xi )\widetilde{\Phi }(\xi ).
\end{equation*}
Therefore in view of \eqref{A.01} we have $\widetilde{\theta
}_{j}(0,\xi _{N})\equiv 0$. Thus the first relation \eqref{A.19} is
proved. The second  is similar.

Let us obtain now the estimates

\begin{equation}
\int\limits_{R^{N}}|z^{\prime \prime }|^{\gamma }\left\vert \theta
_{j}(z)\right\vert dz\leq C\mu ,\quad \int\limits_{R^{N}}|z_{N-1}|^{\alpha
\gamma }\left\vert \theta _{j}(z)\right\vert dz\leq C\mu ,\quad j\in Z,
\label{AA.1}
\end{equation}

\begin{equation}
\int\limits_{R^{N}}|z^{\prime \prime }|^{\gamma }\left\vert \left( \theta
_{j}\right) _{z_{k}}(z)\right\vert dz\leq C\mu ,\quad
\int\limits_{R^{N}}|z_{N-1}|^{\alpha \gamma }\left\vert \left( \theta
_{j}\right) _{z_{k}}(z)\right\vert dz\leq C\mu ,\quad j\in
Z,k=\overline{1,N},  \label{AA.2}
\end{equation}

\begin{equation}
\int\limits_{R^{N}}|z^{\prime \prime }|^{\gamma }\left\vert
D_{z_{N}}^{n}\left( \theta _{j}\right) (z)\right\vert dz\leq C\mu ,\quad
\int\limits_{R^{N}}|z_{N-1}|^{\alpha \gamma }\left\vert D_{z_{N}}^{n}\left(
\theta _{j}\right) (z)\right\vert dz\leq C\mu ,\quad j\in Z,n=1,2,...,
\label{AA.2.01}
\end{equation}
where $\mu $ is from condition \eqref{A.02}. We obtain only the
first inequality \eqref{AA.1} because the rest is quite similar.
Indeed, if  $y^{\prime \prime }\in R^{N-2}$ then we use the
inequality $|z^{\prime \prime }|^{\gamma }\leq C\left( |y^{\prime
\prime }|^{\gamma }+|z^{\prime \prime }-y^{\prime \prime }|^{\gamma
}\right) $ and in view of the definition of $\theta _{j}$ in
\eqref{A.15} we obtain

\begin{equation*}
\int\limits_{R^{N}}|z^{\prime \prime }|^{\gamma }\left\vert \theta
_{j}(z)\right\vert dz\leq \int\limits_{R^{N}}|z^{\prime \prime }|^{\gamma
}dz\int\limits_{R^{N}}|n_{j}(y)||\Phi (z-y)|dy\leq
\end{equation*}

\begin{equation*}
\leq C\int\limits_{R^{N}}dz\int\limits_{R^{N}}\left[ |y^{\prime \prime
}|^{\gamma }|n_{j}(y)|\right] |\Phi
(z-y)|dy+C\int\limits_{R^{N}}dz\int\limits_{R^{N}}|n_{j}(y)|\left[
|z^{\prime \prime }-y^{\prime \prime }|^{\gamma }|\Phi (z-y)|\right] dy\leq
\end{equation*}

\begin{equation*}
\leq \int\limits_{R^{N}}\left( 1+|y^{\prime \prime }|^{\gamma }\right)
|n_{j}(y)|dy\leq C\mu ,
\end{equation*}
as it follows from properties of the function $\Phi (x)$.

Let us now estimate the H\"{o}lder constant of the function
$v(x)$. For this we estimate $|v_{j}(x)|$ and
$|(v_{j})_{x_{i}}(x)|$. From  \eqref{A.12+2} and \eqref{A.19} it
follows that

\begin{equation*}
v_{j}(x)=\int\limits_{R^{1}}dz_{N}\int\limits_{R^{N-1}}u(x-A_{j}^{-1}z)%
\theta _{j}(z)dz^{\prime }=
\end{equation*}

\begin{equation*}
=\int\limits_{R^{1}}dz_{N}\int\limits_{R^{N-1}}\left[
u(x-A_{j}^{-1}z)-u(x^{\prime \prime },x_{N-1},x_{N}-\frac{z_{N}}{2^{j/\beta }})\right] \theta
_{j}(z)dz^{\prime }=
\end{equation*}

\begin{equation}
=\int\limits_{R^{N}}\left[ u(x^{\prime \prime }-\frac{z^{\prime \prime
}}{2^{j}},x_{N-1}-\frac{z_{N-1}}{2^{j/\alpha }},x_{N}-\frac{z_{N}}{2^{j/\beta
}})-u(x^{\prime \prime },x_{N-1},x_{N}-\frac{z_{N}}{2^{j/\beta }})\right]
\theta _{j}(z)dz.  \label{A.21}
\end{equation}

From this, in view of the fact that $u(x)$ satisfies the
H\"{o}lder condition with respect to the first $N-1$ variables
and in view of estimate \eqref{AA.1}, it follows that

\begin{equation}
|v_{j}(x)|\leq C2^{-\gamma j}\left\langle u\right\rangle _{x^{\prime
}}^{(\gamma ,\alpha \gamma )}\int\limits_{R^{N}}\left( |z^{\prime \prime
}|^{\gamma }+|z_{N-1}|^{\alpha \gamma }\right) \left\vert \theta
_{j}(z)\right\vert dz\leq C\mu \left\langle u\right\rangle _{x^{\prime
}}^{(\gamma ,\alpha \gamma )}2^{-\gamma j}.  \label{A.22}
\end{equation}

And for $i=\overline{1,N-2}$ we similarly obtain

\begin{equation*}
(v_{j})_{x_{i}}(x)=2^{j}\int\limits_{R^{N}}\left[ u(x^{\prime \prime
}-\frac{z^{\prime \prime }}{2^{j}},x_{N-1}-\frac{z_{N-1}}{2^{j/\alpha
}},x_{N}-\frac{z_{N}}{2^{j/\beta }})-u(x^{\prime \prime
},x_{N-1},x_{N}-\frac{z_{N}}{2^{j/\beta }})\right] \left( \theta _{j}\right) _{z_{i}}(z)dz,
\end{equation*}
and therefore we have similarly to \eqref{A.22}

\begin{equation}
|(v_{j})_{x_{i}}(x)|\leq C\mu \left\langle u\right\rangle _{x^{\prime
}}^{(\gamma ,\alpha \gamma )}2^{j-\gamma j},\quad i=\overline{1,N-2}.
\label{A.23}
\end{equation}

Likewise for $i=N-1$ and for $i=N$

\begin{equation}
|(v_{j})_{x_{N-1}}(x)|\leq C\mu \left\langle u\right\rangle _{x^{\prime
}}^{(\gamma ,\alpha \gamma )}2^{\frac{j}{\alpha }-\gamma j},  \label{A.24}
\end{equation}

\begin{equation}
|(v_{j})_{x_{N}}(x)|\leq C\mu \left\langle u\right\rangle _{x^{\prime
}}^{(\gamma ,\alpha \gamma )}2^{\frac{j}{\beta }-\gamma j},  \label{A.24.1}
\end{equation}

and more generally for $i=N$

\begin{equation}
|D_{x_{N}}^{k}(v_{j})(x)|\leq C\mu \left\langle u\right\rangle _{x^{\prime
}}^{(\gamma ,\alpha \gamma )}2^{\frac{kj}{\beta }-\gamma j},\quad k=1,2,....
\label{A.24.02}
\end{equation}

Let now $x.y\in R^{N}$. Consider first the case when $\beta \gamma
<1$. We have

\begin{equation}
|v(x)-v(y)|\leq \sum\limits_{j=-\infty }^{\infty
}|v_{j}(x)-v_{j}(y)|=\sum\limits_{j\geq
n_{0}}|v_{j}(x)-v_{j}(y)|+\sum\limits_{j\leq n_{0}}|v_{j}(x)-v_{j}(y)|\equiv
S_{1}+S_{2},  \label{A.25}
\end{equation}
where  $n_{0}=-\log _{2}\rho (x-y)$. To estimate  $S_{1}$ we use
inequalities  \eqref{A.22}

\begin{equation*}
S_{1}\leq \sum\limits_{j\geq n_{0}}2|v_{j}|^{(0)}\leq C\mu \left\langle
u\right\rangle _{x^{\prime }}^{(\gamma ,\alpha \gamma )}\sum\limits_{j\geq
n_{0}}2^{-\gamma j}\leq
\end{equation*}

\begin{equation}
\leq C\mu \left\langle u\right\rangle _{x^{\prime }}^{(\gamma ,\alpha \gamma
)}2^{-n_{0}\gamma }\sum\limits_{k=0}^{\infty }2^{-\gamma k}\leq C\mu
\left\langle u\right\rangle _{x^{\prime }}^{(\gamma ,\alpha \gamma )}\rho
^{\gamma }(x-y).  \label{A.26}
\end{equation}

To estimate $S_{2}$ we use the mean value theorem for the difference
$|v_{j}(x)-v_{j}(y)|$ and estimates \eqref{A.23}- \eqref{A.24.1} for
the corresponding derivatives

\begin{equation*}
S_{2}\leq C\sum\limits_{j\leq n_{0}}\left(
\sum\limits_{k=1}^{N}|x_{k}-y_{k}||(v_{j})_{x_{k}}|^{(0)}\right) \leq
\end{equation*}

\begin{equation*}
\leq C\mu \left\langle u\right\rangle _{x^{\prime }}^{(\gamma ,\alpha \gamma
)}\sum\limits_{j\leq n_{0}}\left( |x^{\prime \prime }-y^{\prime \prime
}|2^{j-j\gamma }+|x_{N-1}-y_{N-1}|2^{\frac{j}{\alpha }-j\gamma
}+|x_{N}-y_{N}|2^{\frac{j}{\beta }-j\gamma }\right) \leq
\end{equation*}

\begin{equation*}
\leq C\mu \left\langle u\right\rangle _{x^{\prime }}^{(\gamma ,\alpha \gamma
)}\left( |x^{\prime \prime }-y^{\prime \prime }|2^{(1-\gamma
)n_{0}}\sum\limits_{k=0}^{\infty }2^{-(1-\gamma )k}\right. +
\end{equation*}

\begin{equation*}
+\left. |x_{N-1}-y_{N-1}|2^{n_{0}\left( \frac{1}{\alpha }-\gamma \right)
}\sum\limits_{k=0}^{\infty }2^{-(\frac{1}{\alpha }-\gamma
)k}+|x_{N}-y_{N}|2^{n_{0}\left( \frac{1}{\beta }-\gamma \right)
}\sum\limits_{k=0}^{\infty }2^{-(\frac{1}{\beta }-\gamma )k}\right) \leq
\end{equation*}

\begin{equation}
\leq C\mu \left\langle u\right\rangle _{x^{\prime }}^{(\gamma ,\alpha \gamma
)}\left( |x^{\prime \prime }-y^{\prime \prime }|\rho ^{-1+\gamma }(x-y)+
\right.  \label{A.27}
\end{equation}

\begin{equation*}
\left. +|x_{N-1}-y_{N-1}|\rho ^{-\frac{1}{\alpha }+\gamma
}(x-y)+|x_{N}-y_{N}|\rho ^{-\frac{1}{\beta }+\gamma }(x-y)\right) \leq
\end{equation*}

\begin{equation*}
\leq C\mu \left\langle u\right\rangle _{x^{\prime }}^{(\gamma ,\alpha \gamma
)}\rho ^{\gamma }(x-y),
\end{equation*}

where we have used the fact that $\beta \gamma <1$ and consequently
 $(\frac{1}{\beta }-\gamma )>0$.

From \eqref{A.26} and \eqref{A.27} it follows that

\begin{equation}
|v(x)-v(y)|\leq C\mu \left\langle u\right\rangle _{x^{\prime }}^{(\gamma
,\alpha \gamma )}\rho ^{\gamma }(x-y).  \label{A.27.01}
\end{equation}

This proves the theorem in the case $\beta \gamma <1$.

Let now $\beta \gamma >1$.

The proof in this case requires only a small change. Firstly,
selecting in the previous proof point $x$ and $y$ such that
$x_{N}=y_{N}$, that is considering the H\"{o}lder property of
the function $v(x)$  only with respect to the variables $x^{\prime
}$, we obtain estimate \eqref{A.27} with $|x_{N}-y_{N}|=0$. This
proves \eqref{A.27.01} for such $x$ and $y$ and hence this gives the
desired smoothness of $v(x)$ with respect to the variables
$x^{\prime }$. Now the smoothness property of this function in the
variable $x_{N}$ should be considered separately. For this purpose,
with definition \eqref{1.3.03} in mind,  we need to consider $k$-th
difference in variable $x_{N}$ of the function $v(x)$. Let $k$ be a
sufficiently large positive integer such that $k /\beta>\gamma$,
$h>0$. Similarly to the previous

\begin{equation*}
|\Delta _{h,x_{N}}^{k}v(x)|\leq \sum\limits_{j=-\infty }^{\infty }|\Delta
_{h,x_{N}}^{k}v_{j}(x)|=\sum\limits_{j\geq n_{0}}|\Delta
_{h,x_{N}}^{k}v_{j}(x)|+\sum\limits_{j\leq n_{0}}|\Delta
_{h,x_{N}}^{k}v_{j}(x)|\equiv S_{1}+S_{2},
\end{equation*}

where $n_{0}=-\log _{2}h^{\beta }$. The sum $S_{1}$ is estimated at
exactly the same way as above, which gives

\begin{equation*}
S_{1}\leq C\mu \left\langle u\right\rangle _{x^{\prime }}^{(\gamma ,\alpha
\gamma )}h^{\beta \gamma }.
\end{equation*}

The sum $S_{2}$ is also evaluated as before taking into account the
fact that

\begin{equation*}
|\Delta _{h,x_{N}}^{k}v_{j}(x)|\leq Ch^{k}\left\vert
D_{x_{N}}^{k}v_{j}(x)\right\vert ^{(0)}\leq C\mu \left\langle u\right\rangle
_{x^{\prime }}^{(\gamma ,\alpha \gamma )}2^{\frac{kj}{\beta }-\gamma j},
\end{equation*}

where we also used estimate \eqref{A.24.02}. This gives

\begin{equation*}
S_{2}\leq C\mu \left\langle u\right\rangle _{x^{\prime }}^{(\gamma ,\alpha
\gamma )}h^{k}2^{n_{0}\left( \frac{k}{\beta }-\gamma \right)
}\sum\limits_{k=0}^{\infty }2^{-(\frac{k}{\beta }-\gamma )k}\leq C\mu
\left\langle u\right\rangle _{x^{\prime }}^{(\gamma ,\alpha \gamma
)}h^{\beta \gamma }.
\end{equation*}

From the estimates for $S_{1}$ and $S_{2}$ it follows that

\begin{equation*}
|\Delta _{h,x_{N}}^{k}v(x)|\leq C\mu \left\langle u\right\rangle _{x^{\prime
}}^{(\gamma ,\alpha \gamma )}h^{\beta \gamma }.
\end{equation*}

Thus by definition \eqref{1.3.03} the theorem is proved.
\end{proof}

Following the idea of \cite{1} and similar to the conditions of
Theorem \ref{T1.1}, we give simple sufficient conditions on
$\widetilde{m}(\xi)$ to have condition \eqref{A.02}. Note fist that
after the change of the variables $y=A_{j}^{-1}x$ we obtain

\begin{equation}
\widetilde{n}_{j}(\xi )=C\int\limits_{R^{N}}e^{ix\xi
}a_{j}^{-1}m_{j}(A_{j}^{-1}x)dx=C\int\limits_{R^{N}}e^{i(y,A_{j}\xi
)}m_{j}(y)dy=\widetilde{m}_{j}(A_{j}\xi )=\widetilde{m}(A_{j}\xi )\chi (\xi
).  \label{B.1}
\end{equation}
Denote for $\lambda >0$ \ $A_{\lambda }\xi =(\lambda \xi ^{\prime
\prime },\lambda ^{\frac{1}{\alpha }}\xi _{N-1},\lambda
^{\frac{1}{\beta }}\xi _{N}) $ and denote $B_{0}=\left\{ \xi \in
R^{N}:1/8\leq \rho (\xi )\leq 8\right\} $. All sufficient conditions
to have \eqref{A.02}, which we state below, are linked with the
property of the Fourier transform

\begin{equation*}
-ix_{k}f(x)=\widetilde{f_{\xi _{k}}},
\end{equation*}
as well as with the well-known the Hausdorff-Young  inequality

\begin{equation}
\left\Vert f(x)\right\Vert _{L_{p^{\prime }}(R^{N})}\leq C_{N,p}\left\Vert
\widetilde{f}(\xi )\right\Vert _{L_{p}(R^{N})},\quad p\in (1,2],\quad
p^{\prime }=\frac{p}{p-1}.  \label{B.2}
\end{equation}

\begin{lemma}
\label{L2.1} Let uniformly in $\lambda >0$

\begin{equation*}
\widetilde{m}(A_{\lambda }\xi )\in W_{p}^{s}(B_{0}),\quad p\in (1,2],\quad
s>\frac{N}{p}+\gamma .
\end{equation*}

Then conditions \eqref{A.02} are satisfied and

\begin{equation}
\mu \leq \sup\limits_{\lambda }C\left\Vert \widetilde{m}(A_{\lambda }\xi
)\right\Vert _{W_{p}^{s}(B_{0})}.  \label{(B.3)}
\end{equation}
\end{lemma}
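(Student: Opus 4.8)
The goal is to bound the weighted $L_1$-norm $\int_{R^N}(1+|x''|^\gamma+|x_{N-1}|^{\alpha\gamma})|n_j(x)|\,dx$ uniformly in $j\in Z$, given control of $\widetilde m(A_\lambda\xi)$ in the Sobolev space $W_p^s(B_0)$ with $s>N/p+\gamma$. The key observation, coming from \eqref{B.1}, is that $\widetilde n_j(\xi)=\widetilde m(A_j\xi)\chi(\xi)$, so $\widetilde n_j$ is exactly $\widetilde m(A_\lambda\xi)$ with $\lambda=2^j$ multiplied by the fixed cut-off $\chi$ supported in $B_0$ (since $\chi(\xi)=\omega(\rho(\xi))$ vanishes outside $\{1/4\le\rho(\xi)\le4\}\subset B_0$). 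Therefore $\widetilde n_j\in W_p^s(R^N)$ with support in $B_0$, and $\|\widetilde n_j\|_{W_p^s(R^N)}\le C\|\widetilde m(A_{2^j}\xi)\|_{W_p^s(B_0)}\le C\sup_\lambda\|\widetilde m(A_\lambda\xi)\|_{W_p^s(B_0)}$, with a constant depending only on $\chi$ and hence on the fixed data. The boundedness of the cut-off and its derivatives is uniform in $j$ because $\chi$ does not depend on $j$; this is the essential reason the $A_j$-rescaling interacts harmlessly with the estimate.

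First I would handle the unweighted part $\int|n_j|\,dx$. Split $R^N=\{|x|\le1\}\cup\{|x|>1\}$. On the unit ball, H\"older's inequality gives $\int_{|x|\le1}|n_j|\,dx\le C\|n_j\|_{L_{p'}(R^N)}$, and by the Hausdorff–Young inequality \eqref{B.2} this is $\le C\|\widetilde n_j\|_{L_p(R^N)}\le C\|\widetilde n_j\|_{W_p^s}$. On $\{|x|>1\}$, I would use the standard device: for any multi-index with $|x^\beta|\sim|x|^{s'}$ for some integer $s'$ with $N/p'<s'\le s$, one has $x^\beta n_j(x)=\pm\widehat{D^\beta\widetilde n_j}$ up to constants, so $\||x|^{s'}n_j\|_{L_{p'}}\le C\sum_{|\beta|=s'}\|D^\beta\widetilde n_j\|_{L_p}\le C\|\widetilde n_j\|_{W_p^s}$; then $\int_{|x|>1}|n_j|\,dx\le\||x|^{s'}n_j\|_{L_{p'}}\cdot\||x|^{-s'}\|_{L_{p}(|x|>1)}$, and the last factor is finite precisely because $s'>N/p'$ is equivalent to $s'p>N$ — wait, one must be careful: $\||x|^{-s'}\|_{L_p(|x|>1)}<\infty$ requires $s'p>N$, i.e. $s'>N/p$. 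Since $s>N/p+\gamma\ge N/p$, one can choose an integer $s'$ with $N/p<s'\le s$ (enlarging $s$ to the next integer if needed, which only strengthens the hypothesis harmlessly, or arguing with non-integer weights via the equivalence of $W_p^s$ norms); this is the one place demanding care.

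Next, the weighted parts. For the $|x''|^\gamma$ weight, write $|x''|^\gamma\le C(1+|x|^{s'})$ is too crude; instead I would directly estimate $\int|x''|^\gamma|n_j|\,dx$ by splitting into $|x|\le1$ and $|x|>1$ again. On $|x|\le1$ the weight is bounded, reducing to the unweighted case. On $|x|>1$, I would use $|x''|^\gamma|n_j(x)|\le|x|^\gamma|n_j(x)|$ and Hölder with the pair $(p',p)$: bound $\||x|^{s}n_j\|_{L_{p'}}$ — which needs $s$ (or an integer $\ge s$) derivatives of $\widetilde n_j$ in $L_p$ — times $\||x|^{\gamma-s}\|_{L_p(|x|>1)}$, the latter finite iff $(s-\gamma)p>N$, i.e. $s>N/p+\gamma$, which is exactly the hypothesis. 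The $|x_{N-1}|^{\alpha\gamma}$ weight is treated identically since $\alpha\in(0,1]$ gives $|x_{N-1}|^{\alpha\gamma}\le C(1+|x_{N-1}|^\gamma)\le C(1+|x|^\gamma)$. Collecting the three pieces yields \eqref{A.02} with $\mu\le C\sup_\lambda\|\widetilde m(A_\lambda\xi)\|_{W_p^s(B_0)}$, which is \eqref{(B.3)}. The main obstacle is purely bookkeeping: matching the integrability exponent of the weight $|x|^{\gamma-s}$ (or $|x|^{-s}$) against $N/p$ to see that the threshold $s>N/p+\gamma$ is exactly what makes the tail integrals converge, and making the "fractional-weight via integer-order Sobolev norm" step rigorous using the embedding $W_p^s\hookrightarrow W_p^{\lceil s\rceil}$ is false — rather one uses that for the decay estimate it suffices to differentiate $\widetilde n_j$ enough times, and the $W_p^s$ norm with $s$ slightly above $N/p+\gamma$ dominates a sufficiently high integer-order derivative norm only if $s$ is large enough; in practice one simply takes $s$ to be that integer or invokes the known Hausdorff–Young-type weighted inequality from \cite{1}, which is the cleanest route.
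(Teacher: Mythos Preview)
Your approach is essentially the same as the paper's: identify $\widetilde n_j(\xi)=\widetilde m(A_j\xi)\chi(\xi)$ via \eqref{B.1}, then combine H\"older's inequality with Hausdorff--Young \eqref{B.2} so that the weight on the physical side becomes derivatives of $\widetilde n_j$ of total order just above $N/p+\gamma$. The paper avoids your split at $|x|=1$ by bounding $1+|x''|^\gamma+|x_{N-1}|^{\alpha\gamma}\le C(1+x^2)^{\gamma/2}$, multiplying and dividing by $(1+x^2)^{r/2}$ with $r>N/p$, and applying H\"older once globally; this sidesteps the integer-versus-fractional bookkeeping you wrestle with in your last paragraph, since the single weight $(1+x^2)^{(\gamma+r)/2}$ directly produces the $W_p^s$ norm with $s=\gamma+r>N/p+\gamma$.
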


\begin{proof}
(compare \cite{1}).

In view of \eqref{B.1} for $r>N/p $

\begin{equation*}
\int\limits_{R^{N}}(1+\left\vert x^{\prime \prime }\right\vert ^{\gamma
}+\left\vert x_{N-1}\right\vert ^{\alpha \gamma })\left\vert
n_{j}(x)\right\vert dx\leq C\int\limits_{R^{N}}(1+x^{2})^{\frac{\gamma
+r}{2}}\left\vert n_{j}(x)\right\vert (1+x^{2})^{-\frac{r}{2}}dx\leq
\end{equation*}

\begin{equation*}
\leq C\left( \int\limits_{R^{N}}\left[ (1+x^{2})^{\frac{\gamma
+r}{2}}\left\vert n_{j}(x)\right\vert \right] ^{p^{\prime }}dx\right)
^{\frac{1}{p^{\prime }}}\left( \int\limits_{R^{N}}(1+x^{2})^{-\frac{rp}{2}}dx\right)
^{\frac{1}{p}}\leq
\end{equation*}

\begin{equation*}
\leq C\left[ \int\limits_{R^{N}}\left( \sum\limits_{\left\vert \omega
\right\vert =0}^{\gamma +r}\left\vert D_{\xi }^{\omega
}\widetilde{n}_{j}(\xi )\right\vert ^{p}\right) d\xi \right] ^{\frac{1}{p}}\leq
C\left\Vert \widetilde{m}(A_{j}\xi )\right\Vert _{W_{p}^{\gamma +r}(B_{0})}.
\end{equation*}

 The lemma follows.
\end{proof}

The same idea that was used in the proof of Lemma \ref{L2.1} can be
used by groups of the variables. That is, for example, we obtain
with $r>(N-1)/p$

\begin{equation*}
\int\limits_{R^{N}}(1+\left\vert x^{\prime \prime }\right\vert ^{\gamma
}+\left\vert x_{N-1}\right\vert ^{\alpha \gamma })\left\vert
n_{j}(x)\right\vert dx\leq C\int\limits_{R^{N}}(1+(x^{\prime
})^{2})^{\frac{\gamma }{2}}\left\vert n_{j}(x)\right\vert dx=
\end{equation*}

\begin{equation*}
=C\int\limits_{R^{N}}\left[ (1+(x^{\prime })^{2})^{\frac{\gamma
+r}{2}}(1+ix_{N})\left\vert n_{j}(x)\right\vert \right] \left[ (1+(x^{\prime
})^{2})^{-\frac{r}{2}}(1+ix_{N})^{-1}\right] dx\leq
\end{equation*}

\begin{equation*}
\leq C\left\{ \int\limits_{R^{N}}\left[ (1+(x^{\prime })^{2})^{\frac{\gamma
+r}{2}}(1+ix_{N})\left\vert n_{j}(x)\right\vert \right] ^{p^{\prime
}}dx\right\} ^{\frac{1}{p^{\prime }}}\left\{
\int\limits_{R^{N}}(1+(x^{\prime
})^{2})^{-\frac{rp}{2}}(1+ix_{N})^{-p}dx\right\} ^{\frac{1}{p}}\leq
\end{equation*}

\begin{equation*}
\leq C\sum\limits_{\substack{ |\omega ^{\prime }|\leq \gamma +r,  \\
\omega _{N}\in \left\{ 0,1\right\} }}\left\Vert D_{\xi ^{\prime
},\xi _{N}}^{(\omega ^{\prime },\omega _{N})}\widetilde{m}(A_{j}\xi
)\right\Vert _{L_{p}(B_{0})},
\end{equation*}
where the sum in the last expression is considered in all
multi-indices $\omega = (\omega^{\prime}, \omega_{N})$ such that
$|\omega^{\prime}| \leq \gamma + r $ with $ r> (N-1) / p $ and
$\omega_{N}\in \left\{0,1 \right\}$.

Thus the following lemma holds.

\begin{lemma}
\label{L2.2} Let for any  $\lambda > 0$ and for some $p\in (1,2]$
with $s>(N-1)/p$ we have

\begin{equation*}
M_{1}\equiv \sup\limits_{\lambda >0}\sum\limits_{\substack{ |\omega
^{\prime }|\leq \gamma +s,  \\ \omega _{N}\in \left\{ 0,1\right\}
}}\left\Vert D_{\xi ^{\prime },\xi _{N}}^{(\omega ^{\prime },\omega
_{N})}\widetilde{m}(A_{\lambda }\xi )\right\Vert
_{L_{p}(B_{0})}<\infty .
\end{equation*}

Then condition \eqref{A.02} is satisfied and $\mu \leq CM_{1}.$
\end{lemma}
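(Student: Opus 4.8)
The plan is to follow exactly the same scheme already used for Lemma~\ref{L2.1}, but now splitting the product weight $(1+|x''|^\gamma+|x_{N-1}|^{\alpha\gamma})$ by groups of variables rather than uniformly, so that $x_N$ is absorbed at the cheapest possible cost (only one derivative) while the Sobolev smoothness is spent only on the group $x'=(x'',x_{N-1})$. First I would bound $1+|x''|^\gamma+|x_{N-1}|^{\alpha\gamma}\le C(1+(x')^2)^{\gamma/2}$, which holds since $\alpha\le 1$ implies $|x_{N-1}|^{\alpha\gamma}\le C(1+|x_{N-1}|^2)^{\gamma/2}\le C(1+(x')^2)^{\gamma/2}$. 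This reduces the task to estimating $\int_{R^N}(1+(x')^2)^{\gamma/2}|n_j(x)|\,dx$.

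Next I would insert the auxiliary factor $(1+ix_N)$ and its reciprocal, writing the integrand as
\[
\Bigl[(1+(x')^2)^{(\gamma+r)/2}(1+ix_N)\,|n_j(x)|\Bigr]\cdot\Bigl[(1+(x')^2)^{-r/2}(1+ix_N)^{-1}\Bigr],
\]
and apply H\"older's inequality with exponents $p'$ and $p$. The second factor lies in $L_p(R^N)$ precisely when $rp>N-1$ (integrability in $x'\in R^{N-1}$) and $p>1$ (integrability in $x_N$, since $(1+ix_N)^{-p}$ is integrable for $p>1$), which is exactly the hypothesis $s>(N-1)/p$ together with $p\in(1,2]$. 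For the first factor I would invoke the Hausdorff--Young inequality~\eqref{B.2}: the $L_{p'}$ norm of $(1+(x')^2)^{(\gamma+r)/2}(1+ix_N)n_j(x)$ is controlled by the $L_p$ norm of its Fourier transform, and that Fourier transform is, up to constants, a sum of mixed derivatives $D_{\xi'}^{\omega'}D_{\xi_N}^{\omega_N}\widetilde n_j(\xi)$ with $|\omega'|\le \gamma+r$ and $\omega_N\in\{0,1\}$ — the multiplication by $x'$-monomials turning into $\xi'$-derivatives and the factor $(1+ix_N)$ producing at most one $\xi_N$-derivative. Recalling from~\eqref{B.1} that $\widetilde n_j(\xi)=\widetilde m(A_j\xi)\chi(\xi)$ and that $\chi$ is a fixed smooth cutoff supported in $B_0$, these derivatives are bounded on $B_0$ by the corresponding derivatives of $\widetilde m(A_j\xi)$, so the whole quantity is dominated by $M_1$ uniformly in $j$. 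Taking $r$ noninteger slightly below $s$ (or using the equivalent fractional Sobolev formulation as in Lemma~\ref{L2.1}) finishes the bound $\mu\le CM_1$.

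The main obstacle — really the only delicate point — is the same bookkeeping one that arises in Lemma~\ref{L2.1}: making precise the passage from "multiply by $(1+(x')^2)^{(\gamma+r)/2}$ with $r$ possibly noninteger" to "a finite sum of honest $\xi'$-derivatives of order $\le\gamma+r$." This is handled exactly as in~\cite{1}, either by choosing $r$ to be a suitable integer and using $(1+x^2)^{(\gamma+r)/2}\le C\sum_{|\omega|\le \lceil\gamma+r\rceil}|x^\omega|$ with the Leibniz rule, or by working directly with fractional Bessel-potential/Sobolev norms $W_p^{\gamma+r}$ and the lifting property of $(1-\Delta_{x'})^{(\gamma+r)/2}$; in either reading the estimate of the second (reciprocal) factor still only needs $rp>N-1$. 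Everything else is the routine H\"older/Hausdorff--Young chain displayed just above the statement, so the lemma follows.
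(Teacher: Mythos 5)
Your argument is exactly the paper's: the same reduction to $\int_{R^N}(1+(x')^2)^{\gamma/2}|n_j(x)|\,dx$, the same insertion of $(1+ix_N)$ and its reciprocal with the H\"older/Hausdorff--Young chain based on \eqref{B.1} and \eqref{B.2}, and the same condition $rp>N-1$ — this is precisely the displayed computation preceding the lemma in the text. The proof is correct and no further comment is needed.
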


Formulate for example yet another assertion, which can be proved
exactly the same way as the previous two lemmas taking into account
that $(1+\left\vert x^{\prime \prime }\right\vert ^{\gamma
}+\left\vert x_{N-1}\right\vert ^{\alpha \gamma })\leq
\prod_{k=1}^{N-1}|1+ix_{k}|$ and using the multiplication and
division by $\prod_{k=1}^{N}|1+ix_{k}|$.

\begin{lemma}
\label{L2.3} Suppose that for some $p\in (1,2]$ the following
condition is satisfied

\begin{equation*}
M_{2}\equiv \sup\limits_{\lambda >0}\sum\limits_{\omega }\left\Vert D_{\xi
}^{\omega }\widetilde{m}(A_{\lambda }\xi )\right\Vert _{L_{p}(B_{0})}<\infty
,
\end{equation*}
where the sum is taken over all multi-indices $\omega =(\omega
_{1},...,\omega _{N-1},\omega _{N})$ such that $\omega
_{1},...,\omega _{N-1}\in \left\{ 0,1,2\right\} $, $\omega _{N}\in
\left\{ 0,1\right\} $.

Then condition \eqref{A.02} is satisfied and $\mu \leq CM_{2}$.
\end{lemma}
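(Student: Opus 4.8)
The plan is to mimic the proofs of Lemmas \ref{L2.1} and \ref{L2.2}, inserting a different integrand-weight decomposition to convert the factor $(1+|x''|^\gamma+|x_{N-1}|^{\alpha\gamma})$ into a product over the individual variables $x_1,\dots,x_{N-1}$. The starting point is again formula \eqref{B.1}, which identifies $\widetilde n_j(\xi)=\widetilde m(A_j\xi)\chi(\xi)$, so that $\widetilde n_j$ is supported in $B_0$ and estimating weighted $L_1$-norms of $n_j$ reduces, via the Hausdorff--Young inequality \eqref{B.2} and the identity $-ix_k f(x)=\widetilde{f_{\xi_k}}$, to estimating $L_p(B_0)$-norms of derivatives of $\widetilde m(A_j\xi)$.

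Concretely, I would first use the elementary pointwise bound suggested by the hint, namely
\begin{equation*}
1+|x''|^{\gamma}+|x_{N-1}|^{\alpha\gamma}\;\le\;C\prod_{k=1}^{N-1}|1+ix_k|,
\end{equation*}
which holds because each summand on the left is dominated by a product of at most the first $N-1$ of these factors, and $\gamma,\alpha\gamma\le 1$. Then I would write, inside the integral defining $\mu$ (cf. \eqref{A.02}),
\begin{equation*}
\int_{R^N}(1+|x''|^\gamma+|x_{N-1}|^{\alpha\gamma})|n_j(x)|\,dx
\le C\int_{R^N}\Bigl[\prod_{k=1}^{N}|1+ix_k|\,\,|n_j(x)|\Bigr]\,\Bigl[\prod_{k=1}^{N}|1+ix_k|^{-1}\Bigr]\,dx,
\end{equation*}
multiplying and dividing by $\prod_{k=1}^{N}|1+ix_k|$ — note the product now runs to $N$, supplying the extra decay factor $|1+ix_N|^{-1}$ needed for integrability in the last variable. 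Applying H\"older with exponents $p'$ and $p$ gives a bound by $\|\prod_k|1+ix_k|\,|n_j(x)|\|_{L_{p'}(R^N)}$ times $\|\prod_k|1+ix_k|^{-1}\|_{L_p(R^N)}$; the second factor is finite since $\prod_k|1+ix_k|^{-p}$ is a product of one-dimensional integrable functions (each $(1+x_k^2)^{-p/2}$ integrable because $p>1$).

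The first factor is then handled by Hausdorff--Young: $\prod_k|1+ix_k|\,|n_j(x)|$ is, up to constants, the modulus of a function whose Fourier transform is $\prod_{k=1}^N(1+\partial_{\xi_k})\widetilde n_j(\xi)=\prod_{k=1}^N(1+\partial_{\xi_k})[\widetilde m(A_j\xi)\chi(\xi)]$, so
\begin{equation*}
\Bigl\|\prod_{k=1}^N|1+ix_k|\,|n_j(x)|\Bigr\|_{L_{p'}(R^N)}
\le C\Bigl\|\prod_{k=1}^N(1+\partial_{\xi_k})\widetilde n_j(\xi)\Bigr\|_{L_p(R^N)}
\le C\sum_{\omega}\bigl\|D_\xi^\omega\widetilde m(A_j\xi)\bigr\|_{L_p(B_0)},
\end{equation*}
where expanding the product of operators $\prod_k(1+\partial_{\xi_k})$ produces exactly multi-indices $\omega$ with each $\omega_k\in\{0,1\}$, but applying the Leibniz rule to the factor $\chi(\xi)$ (which is fixed and smooth) doubles the allowed order in $\xi_1,\dots,\xi_{N-1}$ and likewise in $\xi_N$ — this is why the statement allows $\omega_1,\dots,\omega_{N-1}\in\{0,1,2\}$ and $\omega_N\in\{0,1\}$; the derivatives of $\chi$ contribute only constants depending on the fixed cutoff, absorbed into $C$. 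Taking the supremum over $\lambda>0$ (equivalently over $j\in Z$, since $A_j=A_{2^j}$) yields $\mu\le CM_2$, which is \eqref{A.02}. The one point requiring a little care — the main (though still routine) obstacle — is the bookkeeping of the Leibniz expansion of $(1+\partial_{\xi_k})^{\text{combined}}$ acting on the product $\widetilde m(A_j\xi)\chi(\xi)$ to confirm that no derivative of $\widetilde m$ of order exceeding $1$ in $\xi_k$ is ever needed and that all $\chi$-derivatives land in the fixed annulus $B_0$, so that everything is controlled by $M_2$ uniformly in $\lambda$.
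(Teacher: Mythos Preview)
Your approach matches the paper's, but the displayed inequality is wrong as written: the right-hand side
\[
C\int_{R^N}\Bigl[\prod_{k=1}^{N}|1+ix_k|\,|n_j(x)|\Bigr]\Bigl[\prod_{k=1}^{N}|1+ix_k|^{-1}\Bigr]dx
\]
simplifies to $C\int|n_j|\,dx$, which certainly does not dominate the weighted integral on the left. What actually happens is that the bound $(1+|x''|^\gamma+|x_{N-1}|^{\alpha\gamma})\le C\prod_{k=1}^{N-1}|1+ix_k|$ leaves a factor $\prod_{k=1}^{N-1}|1+ix_k|$ \emph{in the integrand}, and only then do you multiply and divide by $\prod_{k=1}^{N}|1+ix_k|$. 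The first bracket therefore carries $\prod_{k=1}^{N-1}|1+ix_k|^{2}\cdot|1+ix_N|\cdot|n_j(x)|$, and Hausdorff--Young turns this into the operator $\prod_{k=1}^{N-1}(1+\partial_{\xi_k})^{2}\,(1+\partial_{\xi_N})$ acting on $\widetilde n_j$. That is the true source of the range $\omega_1,\dots,\omega_{N-1}\in\{0,1,2\}$, $\omega_N\in\{0,1\}$ in the statement.

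Your attempt to recover the exponent $2$ from the Leibniz rule applied to $\widetilde m(A_j\xi)\chi(\xi)$ is mistaken: Leibniz \emph{distributes} derivatives between the two factors, so the order of differentiation landing on $\widetilde m(A_j\xi)$ never exceeds the order of the operator applied. With only $\prod_k(1+\partial_{\xi_k})$ you would get at most first-order derivatives of $\widetilde m$ in each variable, which is not what the lemma asserts (and your closing remark that ``no derivative of $\widetilde m$ of order exceeding $1$ in $\xi_k$ is ever needed'' directly contradicts the hypothesis). Once you keep the missing weight factor in the first bracket, the Leibniz expansion on $\widetilde m(A_j\xi)\chi(\xi)$ is routine and produces exactly the multi-index range in the statement, with all $\chi$-derivatives bounded on $B_0$.
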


The fact that considered in Theorem \ref{T2.1} multiplier
$\widetilde{m}(\xi)$ uses the smoothness of the density $u(x)$ for
all variable $x^{\prime}$ except for one variable $x_{N}$ is
insignificant and was considered only for simplicity. Directly from
the proof of Theorem \ref{T2.1} it follows that completely analogous
to this proof the following assertion can be proved.


Let a function $\widetilde{m}(\xi )\in C(R^{N}\backslash \{0\})$ be
bounded.  Let $x\in R^{N}$, let $K\in (0,N)$ be an integer,
$x=(x^{(1)},x^{(2)})$, $x^{(1)}=(x_{1},...,x_{K})$,
$x^{(2)}=(x_{K+1},...,x_{N})$ and similarly $\xi =(\xi ^{(1)},\xi
^{(2)})$, $\xi ^{(1)}=(\xi _{1},...,\xi _{K})$, $\xi ^{(2)}=(\xi
_{K+1},...,\xi _{N})$. Let $\alpha =(\alpha _{1},...,\alpha _{K})$,
$\beta =(\beta _{K+1},...,\beta _{N})$, $\alpha _{i},\in (0,1]$,
$\beta _{k}>0$, and  $\gamma \in (0,1)$.

Denote for $x\in R^{N}$ and for an integer $j\in Z$

\begin{equation}
A_{j}x\equiv (2^{\frac{j}{\alpha _{1}}}x_{1},...,2^{\frac{j}{\alpha
_{K}}}x_{K},2^{\frac{j}{\beta _{K+1}}}x_{K+1},2^{\frac{j}{\beta
_{N}}}x_{N}),\quad a_{j}=\det A_{j}.  \label{A.8.1.1}
\end{equation}

Denote as above $\widetilde{m}_{j}(\xi )=\widetilde{m}(\xi )\chi
(A_{j}^{-1}\xi )$, and let $m_{j}(x)$ be the inverse Fourier
transform of the function $\widetilde{m}_{j}(\xi )$,

\begin{equation}
n_{j}(x)=a_{j}^{-1}m_{j}(A_{j}^{-1}x).  \label{A.8.2.1}
\end{equation}

Let with some $\mu >0$ the following conditions are satisfied

\begin{equation}
\widetilde{m}(\xi )|_{\xi ^{(1)}=0}=\widetilde{m}(0,\xi ^{(2)})\equiv
0,\quad \xi ^{(2)}\in R^{N-K},  \label{A.01.1}
\end{equation}

\begin{equation}
\int\limits_{R^{N}}(1+\sum\limits_{k=1}^{K}|x_{k}|^{\alpha _{k}\gamma
})|n_{j}(x)|dx\leq \mu ,\quad j\in Z.  \label{A.02.1}
\end{equation}

Suppose finally that a function $u(x)\in C_{x^{(1)}}^{\alpha \gamma
}(R^{N})$ has a finite support in $R^{N}$ and satisfies
H\"{o}lder condition with respect to a part of the variables

\begin{equation*}
\left\langle u\right\rangle _{x^{(1)},R^{N}}^{(\alpha \gamma
)}=\sum\limits_{k=1}^{K}\left\langle u\right\rangle _{x_{k},R^{N}}^{(\alpha
_{k}\gamma )}<\infty .
\end{equation*}

Denote as above

\begin{equation*}
v(x)\equiv Mu\equiv m(x)\ast u(x)\equiv F^{-1}(\widetilde{m}(\xi
)\widetilde{u}(\xi )).
\end{equation*}

\begin{theorem}
\label{T2.2} Let conditions \eqref{A.01.1}, \eqref{A.02.1} are
satisfied. Then the function $v(x)$ satisfies the H\"{o}lder
condition with respect to all variables and

\begin{equation}
\left\langle v\right\rangle _{x^{(1)},x^{(2)},R^{N}}^{(\alpha \gamma ,\beta
\gamma )}\leq C\mu \left\langle u\right\rangle _{x^{(1)},R^{N}}^{(\alpha
\gamma )},  \label{A.03.1}
\end{equation}
where

\begin{equation*}
\left\langle v\right\rangle _{x^{(1)},x^{(2)},R^{N}}^{(\alpha \gamma ,\beta
\gamma )}=\sum\limits_{k=1}^{K}\left\langle v\right\rangle
_{x_{k},R^{N}}^{(\alpha _{k}\gamma )}+\sum\limits_{k=K+1}^{N}\left\langle
v\right\rangle _{x_{k},R^{N}}^{(\beta _{k}\gamma )}.
\end{equation*}
\end{theorem}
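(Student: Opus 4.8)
The plan is to mimic the proof of Theorem \ref{T2.1} almost verbatim, with the single group of variables $x^{\prime\prime}$ together with $x_{N-1}$ (the variables carrying the H\"older hypothesis on $u$) now replaced by the block $x^{(1)}=(x_1,\dots,x_K)$ and the single ``free'' variable $x_N$ replaced by the block $x^{(2)}=(x_{K+1},\dots,x_N)$. First I would reproduce the Littlewood--Paley machinery: with the anisotropic ``distance'' $\rho$ adapted to the exponents $(\alpha_1,\dots,\alpha_K,\beta_{K+1},\dots,\beta_N)$ and the dilations $A_j$ from \eqref{A.8.1.1}, I define $\widetilde\Phi_j$, the partition of unity $\sum_j\widetilde\Phi_j\equiv 1$ on $R^N\setminus\{0\}$, the pieces $\widetilde v_j=\widetilde m_j\widetilde u_j$, and via the same sequence of changes of variable with $A_j$, $A_j^{-1}$ the representation
\begin{equation*}
v_j(x)=\int_{R^N}u(x-A_j^{-1}z)\,\theta_j(z)\,dz,\qquad \theta_j=n_j\ast\Phi,
\end{equation*}
together with the analogous formulas for $D_{x_k}v_j$ ($1\le k\le K$ giving a factor $2^{j/\alpha_k}$) and for $D_{x_k}^{\,n}v_j$ with $k\in\{K+1,\dots,N\}$ (giving a factor $2^{nj/\beta_k}$).

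Next I would establish the vanishing-integral property: for each $k\in\{K+1,\dots,N\}$ and a.e.\ value of the remaining variables, the integral of $\theta_j$ (and of its $x_k$-derivatives) over the complementary $(N-1)$-dimensional slice is identically zero. This is exactly the computation following \eqref{A.19}: the slice integral is the value at $\xi^{(1)}=0$ (or, for the higher blocks, at the appropriate coordinates $=0$) of a Fourier transform, and $\widetilde\theta_j(\xi)=\widetilde m(A_j\xi)\chi_j(\xi)\widetilde\Phi(\xi)$ vanishes whenever $\xi^{(1)}=0$ by hypothesis \eqref{A.01.1}. Using $|x_k|^{\alpha_k\gamma}\le C(|y_k|^{\alpha_k\gamma}+|x_k-y_k|^{\alpha_k\gamma})$ and the definition $\theta_j=n_j\ast\Phi$ exactly as in the derivation of \eqref{AA.1}, hypothesis \eqref{A.02.1} yields
\begin{equation*}
\int_{R^N}\Bigl(\sum_{k=1}^{K}|z_k|^{\alpha_k\gamma}\Bigr)|\theta_j(z)|\,dz\le C\mu,\qquad \int_{R^N}\Bigl(\sum_{k=1}^{K}|z_k|^{\alpha_k\gamma}\Bigr)\bigl|D_{z}^{\omega}\theta_j(z)\bigr|\,dz\le C\mu,
\end{equation*}
for the relevant low-order derivatives, and similarly with the weight replaced by $1$.

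With these in hand I get the pointwise bounds, in full analogy with \eqref{A.22}--\eqref{A.24.02}: subtracting off $u$ evaluated with the $x^{(1)}$-variables frozen (legitimate by the vanishing-integral property applied to the $x^{(1)}$-slice) and using the H\"older condition of $u$ in each $x_k$, $k\le K$, gives $|v_j(x)|\le C\mu\langle u\rangle_{x^{(1)}}^{(\alpha\gamma)}2^{-\gamma j}$, $|D_{x_k}v_j(x)|\le C\mu\langle u\rangle_{x^{(1)}}^{(\alpha\gamma)}2^{j/\alpha_k-\gamma j}$ for $k\le K$, and $|D_{x_k}^{\,n}v_j(x)|\le C\mu\langle u\rangle_{x^{(1)}}^{(\alpha\gamma)}2^{nj/\beta_k-\gamma j}$ for $k\ge K+1$. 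Then, to estimate $\langle v\rangle_{x_k}^{(\alpha_k\gamma)}$ for $k\le K$ and $\langle v\rangle_{x_k}^{(\beta_k\gamma)}$ for $k\ge K+1$, I split $\sum_j$ at $n_0=-\log_2 h^{\text{(exponent)}}$ where the increment is in the single coordinate $x_k$: the high-frequency part $S_1$ is summed using $2|v_j|^{(0)}$, and the low-frequency part $S_2$ using the mean value theorem (when $\beta_k\gamma<1$) or, when $\beta_k\gamma>1$, a $k_0$-th order difference $\Delta_{h,x_k}^{k_0}$ with $k_0/\beta_k>\gamma$ and the bound $|\Delta_{h,x_k}^{k_0}v_j|\le Ch^{k_0}|D_{x_k}^{k_0}v_j|^{(0)}$ together with the equivalent seminorm \eqref{1.3.03} --- exactly the two cases handled at the end of the proof of Theorem \ref{T2.1}. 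Summing the two geometric series gives each seminorm bounded by $C\mu\langle u\rangle_{x^{(1)},R^N}^{(\alpha\gamma)}$, and adding over $k=1,\dots,N$ yields \eqref{A.03.1}.

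Since every step is a line-by-line transcription of the proof of Theorem \ref{T2.1}, the only point that needs genuine (though still routine) care is bookkeeping: one must check that freezing the $x^{(1)}$-block alone suffices to exploit \eqref{A.01.1}, and that, for the smoothness statement in an $x_k$ with $k\ge K+1$, one freezes \emph{only} that single coordinate in the increment while leaving the other $x^{(2)}$-coordinates fixed and using the vanishing property on the corresponding one-dimensional slice. No single step is a real obstacle; the potential pitfall is simply to make sure that the weight $1+\sum_{k=1}^K|x_k|^{\alpha_k\gamma}$ in \eqref{A.02.1}, which does \emph{not} control the $x^{(2)}$-variables, is nonetheless enough --- and it is, precisely because the smoothness of $v$ in $x^{(2)}$ is produced by the frequency localization (the factors $2^{nj/\beta_k}$) and the vanishing moments, not by any regularity of $u$ in those variables.
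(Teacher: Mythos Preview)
Your proposal is correct and matches the paper's own treatment, which simply asserts that Theorem~\ref{T2.2} follows ``completely analogous'' to the proof of Theorem~\ref{T2.1}; your outline is precisely that line-by-line transcription. One small cleanup: the vanishing-integral property you need is always integration over the $K$-dimensional $z^{(1)}$-slice (so that you may subtract $u(x^{(1)},x^{(2)}-A_j^{-1,(2)}z^{(2)})$ and invoke the H\"older hypothesis on $u$ in $x^{(1)}$), not over an $(N-1)$-dimensional slice as you write at one point --- but you use the correct version when you actually carry out the estimate.
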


Completely analogous to the proof of Lemmas \ref{L2.1}, \ref{L2.2} a
sufficient condition for inequalities \eqref{A.02.1} can be
obtained. Similarly with Lemma \ref{L2.1} we have the following
assertion.

For $\lambda >0$ denote \ $A_{\lambda }\xi =(\lambda
^{\frac{1}{\alpha _{1}}}\xi _{1},...,\lambda ^{\frac{1}{\alpha
_{K}}}\xi _{K},\lambda ^{\frac{1}{\beta _{K+1}}}\xi
_{K+1},...,\lambda ^{\frac{1}{\beta _{N}}}\xi _{N})$ and denote
$B_{0}=\left\{ \xi \in R^{N}:1/8\leq \rho (\xi )\leq 8\right\} $, \
where $\rho (\xi )=\sum_{л =1}^{K}|\xi _{k}|^{\alpha _{k}}+\sum_{л
=K+1}^{N}|\xi _{k}|^{\beta _{k}}$.

\begin{lemma}
\label{L2.3.1} Let uniformly in  $\lambda >0$

\begin{equation*}
\widetilde{m}(A_{\lambda }\xi )\in W_{p}^{s}(B_{0}),\quad p\in (1,2],\quad
s>\frac{N}{p}+\gamma .
\end{equation*}

Then conditions \eqref{A.02.1} are satisfied and

\begin{equation*}
\mu \leq \sup\limits_{\lambda }C\left\Vert \widetilde{m}(A_{\lambda }\xi
)\right\Vert _{W_{p}^{s}(B_{0})}.
\end{equation*}
\end{lemma}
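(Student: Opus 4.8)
The plan is to mimic almost verbatim the proof of Lemma~\ref{L2.1}, replacing the two-part weight $(1+|x''|^\gamma+|x_{N-1}|^{\alpha\gamma})$ by the new weight $(1+\sum_{k=1}^{K}|x_k|^{\alpha_k\gamma})$ that appears in condition \eqref{A.02.1}. The only structural difference from Theorem~\ref{T2.1} to Theorem~\ref{T2.2} is that the set of ``H\"older'' variables is $x^{(1)}=(x_1,\dots,x_K)$ with exponents $\alpha_k\gamma$ rather than the single block $x',x_{N-1}$; since the estimate of the relevant integral in Lemma~\ref{L2.1} only used the crude bound that the weight is dominated by a power of $|x|$, nothing here is really different.

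First I would record the analogue of \eqref{B.1}: after the change of variables $y=A_j^{-1}x$ one has $\widetilde n_j(\xi)=\widetilde m_j(A_j\xi)=\widetilde m(A_j\xi)\chi(\xi)$, which is supported in $B_0=\{\xi:1/8\le\rho(\xi)\le 8\}$ with the new $\rho$. Next I would note the elementary pointwise inequality
\begin{equation*}
1+\sum_{k=1}^{K}|x_k|^{\alpha_k\gamma}\le C\,(1+|x|^2)^{\frac{\gamma}{2}}\le C\,(1+|x|^2)^{\frac{\gamma+r}{2}}(1+|x|^2)^{-\frac{r}{2}}
\end{equation*}
for any $r>0$ (using $\alpha_k\le 1$ and $\gamma<1$). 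Then, exactly as in Lemma~\ref{L2.1}, for $r>N/p$ I would apply H\"older's inequality with exponents $p'$ and $p$ to split off the factor $(1+|x|^2)^{-r/2}$, whose $p$-th power is integrable over $R^N$ precisely because $rp>N$, and then use the Hausdorff--Young inequality \eqref{B.2} together with the identity $-ix_k f(x)=\widetilde{f_{\xi_k}}$ to convert the weighted $L_{p'}$ norm of $n_j$ into the $L_p$ norm over $B_0$ of the derivatives $D_\xi^\omega\widetilde n_j$ up to order $|\omega|\le\gamma+r$, i.e.\ order $s$ with $s>N/p+\gamma$. This gives
\begin{equation*}
\int_{R^N}\Bigl(1+\sum_{k=1}^{K}|x_k|^{\alpha_k\gamma}\Bigr)|n_j(x)|\,dx\le C\bigl\|\widetilde m(A_j\xi)\bigr\|_{W_p^{s}(B_0)}\le C\sup_{\lambda>0}\bigl\|\widetilde m(A_\lambda\xi)\bigr\|_{W_p^{s}(B_0)},
\end{equation*}
which is \eqref{A.02.1} with $\mu$ bounded by the stated supremum.

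There is really no obstacle here; the statement is designed to be a routine transcription. The one point to be mildly careful about is that the derivatives $D_\xi^\omega\widetilde n_j=D_\xi^\omega(\widetilde m(A_j\xi)\chi(\xi))$ involve, via the Leibniz rule, derivatives of $\chi$, but since $\chi\in C^\infty$ is fixed and supported in $B_0$, this only contributes a fixed multiplicative constant and does not change the order of the Sobolev norm required of $\widetilde m(A_\lambda\xi)$; this is exactly as in \cite{1} and in Lemma~\ref{L2.1}. Hence the lemma follows.
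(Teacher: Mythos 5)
Your proposal is correct and follows exactly the route the paper intends: the paper gives no separate proof of Lemma \ref{L2.3.1}, stating only that it is obtained "similarly with Lemma \ref{L2.1}", and your argument is precisely that of Lemma \ref{L2.1} with the weight $1+|x''|^{\gamma}+|x_{N-1}|^{\alpha\gamma}$ replaced by $1+\sum_{k=1}^{K}|x_k|^{\alpha_k\gamma}$, dominated by $(1+|x|^2)^{\gamma/2}$, followed by the same H\"older and Hausdorff--Young steps. No issues.
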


We also have a more general assertion similar to Theorem \ref{T1.1}.

Denote similarly to the previous section the spaces

\begin{equation*}
\mathcal{H}_{x^{(1)}}^{\alpha \gamma }(R^{N})=C_{x^{(1)}}^{\alpha
\gamma }(R^{N})\cap L_{2}(R^{N}),\quad
\mathcal{H}_{x^{(1)},x^{(2)}}^{\alpha \gamma ,\beta \gamma
}(R^{N})=C_{x^{(1)},x^{(2)}}^{\alpha \gamma, \beta \gamma
}(R^{N})\cap L_{2}(R^{N}),
\end{equation*}
which are the closures of the set of finite functions in the norms

\begin{equation*}
\left\Vert u\right\Vert _{\mathcal{H}_{x^{(1)}}^{\alpha \gamma
}(R^{N})}\equiv \left\Vert u\right\Vert _{L_{2}(R^{N})}+\left\langle
u\right\rangle _{x^{(1)},R^{N}}^{(\alpha \gamma )},\quad \left\Vert
u\right\Vert _{\mathcal{H}_{x^{(1)},x^{(2)}}^{\alpha \gamma ,\beta \gamma
}(R^{N})}\equiv \left\Vert u\right\Vert _{L_{2}(R^{N})}+\left\langle
u\right\rangle _{x^{(1)},x^{(2)},R^{N}}^{(\alpha \gamma ,\beta \gamma )}.
\end{equation*}

Closure of estimate \eqref {A.03.1}, proved for a finite function
$u(x)$, in the norms of the spaces  $\mathcal{H}_{x^{(1)}}^{\alpha
\gamma }(R^{N})$,  $\mathcal{H}_{x^{(1)},x^{(2)}}^{\alpha \gamma
,\beta \gamma }(R^{N})$ and the using of the scheme of the proofs of
Lemmas \ref{L2.1}, \ref{L2.2} leads to the following assertin.

\begin{theorem}
\label{T2.3} Let condition \eqref{A.01.1} be satisfied. Let further
in the notation of Theorem \ref{T1.1} instead of condition
\eqref{1.7} the following condition be satisfied
\begin{equation}
s_{i}>\frac{N_{i}}{p}+\gamma ,\quad i=\overline{1,r},\quad p\in (1,2].
\label{1.7.1}
\end{equation}
Let also similar to \eqref{1.8} the following condition be satisfied
\begin{equation}
\sup_{\lambda >0}\sum_{\left\vert \omega _{i}\right\vert \leq
s_{i}}\left\Vert D_{y_{1}}^{\omega _{1}}D_{y_{2}}^{\omega
_{2}}...D_{y_{r}}^{\omega _{r}}\widetilde{m}(A_{\lambda }\xi )\right\Vert
_{L_{p}(B_{\nu })}\leq \mu .  \label{1.8.1}
\end{equation}

Then the operator $M$ is a bounded linear operator from
$\mathcal{H}_{x^{(1)}}^{\alpha \gamma }(R^{N})$ to
$\mathcal{H}_{x^{(1)},x^{(2)}}^{\alpha \gamma ,\beta \gamma
}(R^{N})$ and
\begin{equation}
\left\Vert Mu\right\Vert _{H_{x^{(1)},x^{(2)}}^{\alpha \gamma ,\beta \gamma
}(R^{N})}\leq C\mu \left\Vert u\right\Vert _{H_{x^{(1)}}^{\alpha \gamma
}(R^{N})}.  \label{A.03.2}
\end{equation}
\end{theorem}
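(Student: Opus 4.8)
The plan is to reduce the statement to Theorems \ref{T1.1} and \ref{T2.2} by a density/closure argument, following exactly the route indicated in the paragraph preceding the theorem. First I would observe that condition \eqref{1.8.1}, together with the group-by-group argument used to pass from Lemma \ref{L2.1} to Lemmas \ref{L2.2}, \ref{L2.3} (multiplication and division by $\prod|1+ix_k|$ inside the appropriate variable groups, the Hausdorff--Young inequality \eqref{B.2}, and the identity $-ix_kf=\widetilde{f_{\xi_k}}$), yields exactly the integral bound \eqref{A.02.1} for the kernels $n_j$ associated with the anisotropy $A_j$ of \eqref{A.8.1.1}; the extra $+\gamma$ in \eqref{1.7.1} compared to \eqref{1.7} is precisely what is needed so that the weight $(1+\sum_k|x_k|^{\alpha_k\gamma})$ can be absorbed, just as $s>N/p+\gamma$ was needed in Lemma \ref{L2.3.1}. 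Since \eqref{A.01.1} is assumed outright, Theorem \ref{T2.2} then applies and gives, for every finite $u\in C^{\alpha\gamma}_{x^{(1)}}(R^N)$, the seminorm estimate \eqref{A.03.1}.

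Next I would supply the $L_2$ bound. Since $p\le 2$ and $s_i\ge 1$, condition \eqref{1.8.1} in particular forces $\widetilde m$ to be bounded (take $\lambda$ ranging over all of $(0,\infty)$ so that the balls $B_\nu$ scaled by $A_\lambda$ cover $R^N\setminus\{0\}$, and use the Sobolev embedding $W^{s_i}_p(B_\nu)\hookrightarrow C(\overline{B_\nu})$ coming from $s_i>N_i/p$); hence $M=\mathit F^{-1}(\widetilde m\,\widetilde u)$ is bounded on $L_2(R^N)$ with norm $\le C\mu$ by Plancherel. Combining this with \eqref{A.03.1} gives
\begin{equation*}
\left\Vert Mu\right\Vert _{\mathcal H_{x^{(1)},x^{(2)}}^{\alpha \gamma ,\beta \gamma }(R^{N})}\le C\mu \left\Vert u\right\Vert _{\mathcal H_{x^{(1)}}^{\alpha \gamma }(R^{N})}
\end{equation*}
for all finite $u$ in $C^{\alpha\gamma}_{x^{(1)}}$. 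Finally, since $\mathcal H_{x^{(1)}}^{\alpha\gamma}(R^N)$ is by definition the closure of such finite functions in the norm on the right, and since $L_2$ convergence of $u_n\to u$ together with uniform boundedness of the seminorms $\langle v_n\rangle^{(\alpha\gamma,\beta\gamma)}_{x^{(1)},x^{(2)}}$ forces $v_n=Mu_n$ to converge (after passing to a subsequence, locally uniformly, by Arzelà--Ascoli applied to the equibounded, equi-Hölder family on compacta) to a limit that must equal $Mu$ in $L_2$ and satisfies the same seminorm bound, the estimate \eqref{A.03.2} extends by continuity to all of $\mathcal H_{x^{(1)}}^{\alpha\gamma}(R^N)$. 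This establishes both the mapping property and the bound.

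The main obstacle, and the only step that is not purely bookkeeping, is the verification that \eqref{1.8.1} implies \eqref{A.02.1} in the general $r$-group setting of Theorem \ref{T1.1}: one must check that the derivative orders allowed by $|\omega_i|\le s_i$, distributed across the groups $y_i$, suffice simultaneously to produce the polynomial weight in the Hölder variables $x^{(1)}$ (needing roughly $\alpha_k\gamma$-powers, controlled by the $+\gamma$ in \eqref{1.7.1}) and an $L_{p'}$-integrable decay factor in every variable (needing $>N_i/p$ derivatives per group, from $s_i>N_i/p$), the two requirements being compatible precisely because $s_i>N_i/p+\gamma$ exceeds the sum of the two separate thresholds. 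Once that estimate is in hand for the scaled multipliers $\widetilde m(A_\lambda\xi)$ uniformly in $\lambda$, and hence for each $\widetilde n_j=\widetilde m(A_j\xi)\chi(\xi)$ via \eqref{B.1}, the rest of the argument is the density closure just described, carried out verbatim as in the transition from the finite-function estimates to \eqref{1.9} in \cite{1}.
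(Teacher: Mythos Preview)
Your proposal is correct and follows essentially the same route as the paper: the paper's proof is just the sentence preceding the theorem, which says to verify \eqref{A.02.1} from \eqref{1.8.1} by the scheme of Lemmas \ref{L2.1}--\ref{L2.2}, apply Theorem \ref{T2.2} for finite $u$, and then close in the $\mathcal H$-norms. You have in fact supplied more detail than the paper (the explicit $L_2$ bound via Plancherel and the Arzel\`a--Ascoli closure step), and your identification of the $+\gamma$ in \eqref{1.7.1} as exactly what is needed to absorb the weight in \eqref{A.02.1} is the right reading of Lemma \ref{L2.3.1}; note only that boundedness of $\widetilde m$ is already part of the standing hypotheses on the multiplier, so you need not derive it from \eqref{1.8.1}.
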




In the following section, we will demonstrate the using of the
proven statements about multipliers to an initial-boundary value
problems for the linearized Cahn-Hilliard equation with dynamic
boundary conditions of two types. Here we give simple examples of
applications of Theorems \ref{T2.1}, \ref{T2.3}.

\bigskip

\textbf{Example 1.}

\bigskip

Let a function $u(x)$ has compact support in $R^{N}$ and satisfies
the Poisson equation

\begin{equation}
\Delta u(x)=f(x),  \label{Ex.0}
\end{equation}

where a function $f(x) $ has compact support in $R^{N}$ and
satisfies H\"{o}lder condition for some single variable, for
example, $x_{1}$ with an exponent $\gamma \in (0,1)$

\begin{equation}
\left\langle f\right\rangle _{x_{1}}^{(\gamma
)}=\sup_{h>0}\frac{|f(x+h\overrightarrow{e}_{1})-f(x)|}{h^{\gamma }}<\infty .  \label{Ex.01}
\end{equation}

Consider all the second derivatives of $u(x)$ containing derivative
with respect to $x_{1}$. It is well known that in terms of Fourier
transform we have the equality

\begin{equation*}
\widetilde{\frac{\partial ^{2}u}{\partial x_{k}\partial x_{1}}}(\xi
)=C\frac{\xi _{k}\xi _{1}}{\xi ^{2}}\widetilde{f}(\xi ),\quad k=\overline{1,N}.
\end{equation*}

Since the function $\widetilde{m}(\xi)=$ $\frac{\xi _{k}
\xi_{1}}{\xi^{2}} $ has the property $\widetilde{m}(\lambda \xi) =
\widetilde{m}(\xi)$ for any positive $\lambda$ and is smooth away
from the origin, then it is  easy to verify the conditions of
Theorem \ref{T2.3}. Consequently,

\begin{equation}
\left\langle \frac{\partial ^{2}u}{\partial x_{k}\partial
x_{1}}\right\rangle _{x}^{(\gamma )}\leq C\left\langle f\right\rangle
_{x_{1}}^{(\gamma )},\quad k=\overline{1,N},  \label{Ex.1}
\end{equation}

where the H\"{o}lder constant in the left hand side of this
inequality is taken with respect to all variables, not only with
respect to $x_{1}$.

Note that in estimate \eqref{Ex.1} only the second derivatives
containing the derivative with respect to $x_{1}$ are present. This
fact is essential as the following example shows. Let $\eta (x)\in
C_{0}^{\infty }(R^{3})$. Consider the function

\begin{equation*}
u_{1}(x)=u_{1}(x_{1},x_{2},x_{3})=(x_{2}^{2}-x_{3}^{2}+x_{2}x_{3})\ln
(x_{2}^{2}+x_{3}^{2})\eta (x_{1},x_{2},x_{3}).
\end{equation*}

It is immediately verified that the function with compact support
$u_{1}(x)$ satisfies equation \eqref{Ex.0} with  right-hand side
satisfying \eqref{Ex.01}. However, it's second derivatives that do
not contain the derivative with respect to$x_{1}$ not only do not
satisfy H\"{o}lder condition, but are just unbounded in
neighborhood of zero.

This example shows that condition \eqref{A.01} on a multiplier can
not be generally dropped. Although the author does not known how
close it is to the sharp condition.

\bigskip

\textbf{Example 2.}

\bigskip

It is interesting, in our opinion, to  consider the following simple
example for a parabolic equation. Let a function $u(x,t)$ with
compact support in $R^{N}\times R^{1}$ satisfies the heat equation

\begin{equation}
\frac{\partial u}{\partial t}-\Delta u=f(x,t),  \label{Ex.2}
\end{equation}

where the fight hand side $f(x,t)$ with compact support satisfies
the H\"{o}lder condition with respect to the variable $t$ only
with the exponent greater than $1/2 $, that is

\begin{equation*}
\left\langle f(x,t)\right\rangle _{t,R^{N}\times R^{1}}^{(\gamma )}<\infty
,\quad \gamma \in (\frac{1}{2},1).
\end{equation*}

Making in \eqref{Ex.2} Fourier transform and denoting the variable
of the Fourier transform with respect to $t$ by $\xi_{0}$, we obtain

\begin{equation*}
\widetilde{\frac{\partial u}{\partial t}}=C \frac{i\xi _{0}}{i\xi _{0}+\xi
^{2}}\widetilde{f}(\xi ,\xi _{0}).
\end{equation*}

Then it follows from Theorem \ref{T2.3} that

\begin{equation*}
\left\langle \frac{\partial u}{\partial t}\right\rangle
_{t,R^{N}\times R^{1}}^{(\gamma )}+\left\langle \frac{\partial
u}{\partial t}\right\rangle _{x,R^{N}\times R^{1}}^{(2\gamma )}\leq
C\left\langle f(x,t)\right\rangle _{t,R^{N}\times R^{1}}^{(\gamma
)}.
\end{equation*}

In particular, since $2\gamma \in (1,2)$, the derivative
$\frac{\partial u}{\partial t}$ has derivatives with respect to
$x_{i}$ and

\begin{equation*}
\sum\limits_{i=1}^{N}\left\langle \frac{\partial ^{2}u}{\partial t\partial
x_{i}}\right\rangle _{x,R^{N}\times R^{1}}^{(2\gamma -1)}\leq C\left\langle
f(x,t)\right\rangle _{t,R^{N}\times R^{1}}^{(\gamma )}.
\end{equation*}

Note again that, as in the previous example, the second derivatives
with respect to the variables $x_{i}$ can be unbounded in general,
for example, $u(x_{1},x_{2},t)=(x_{1}^{2}-x_{2}^{2}+x_{2}x_{1})\ln
(x_{1}^{2}+x_{2}^{2})\eta (x_{1},x_{2})\psi (t)$, $\eta \in
C_{0}^{\infty }(R^{2})$, $\psi \in C_{0}^{\infty }(R^{1})$.

\section{Model problems in a half-space for the linearized
Cahn-Hilliard equation with dynamic boundary conditions.}

\label{s3}

In this section we consider the Schauder estimates for initial
boundary value problems in a half-space for the linearized
Cahn-Hilliard equation with dynamic boundary conditions. These
problems are not included in the well-known general theory of
parabolic initial-boundary value problems (see, eg, \cite{6} -
\cite{6.2}). However, we  significantly use the results of \cite{6}.

The presentation in this section is very sketchy. More detailed text
will be given in a forthcoming paper.

Define the used below space of smooth functions. Let $\Omega $ be a
domain in $R^{N}$, which can be unbounded. Denote by $\Omega
_{T}=\Omega \times (0,T)$, where $T>0$ or $T=+\infty $ . We use
Banach functional spaces $C^{l_{1},l_{2}}(\overline{\Omega _{T}})$
 with elements $u(x,t)$, $x\in \overline{\Omega }$, $t\in \lbrack 0,T]$,
$l_{1},l_{2}>0$ are non-integer. These spaces are defined, for
example, in \cite{Sol15} and consist of functions with smoothness
with respect to the variables $x$ up to the order $l_{1}$ and with
smoothness with respect to the variable $t$ up to the order $l_{2}$,
ie, having a finite norm

\begin{equation}
|u|_{\overline{\Omega _{T}}}^{(^{l_{1},l_{2}})}\equiv \left\Vert
u\right\Vert _{C^{l_{1},l_{2}}(\overline{\Omega _{T}})}\equiv
|u|_{\overline{\Omega _{T}}}^{(0)}+\sum\limits_{|\alpha |=[l_{1}]}\left\langle
D_{x}^{\alpha }u\right\rangle _{x,\overline{\Omega
_{T}}}^{(l_{1}-[l_{1}])}+\left\langle D_{t}^{[l_{2}]}u\right\rangle _{t,\overline{\Omega
_{T}}}^{(l_{2}-[l_{2}])}.  \label{3.1}
\end{equation}

Here $\alpha =(\alpha _{1},...,\alpha _{N})$ is a multiindex,
$|\alpha |=\alpha _{1}+...+\alpha _{N}$, $D_{x}^{\alpha
}=D_{x_{1}}^{\alpha _{1}}...D_{x_{N}}^{\alpha _{N}}$, $[l]$ is the
integer part of a number $l$, $|u|_{\overline{\Omega
_{T}}}^{(0)}=\max_{\overline{\Omega _{T}}}|u(x,t)|$, $\left\langle
D_{x}^{\alpha }u\right\rangle _{x,\overline{\Omega
_{T}}}^{(l_{1}-[l_{1}])}$, $\left\langle
D_{t}^{[l_{2}]}u\right\rangle _{t,\overline{\Omega
_{T}}}^{(l_{2}-[l_{2}])}$ are H\"{o}lder constants of the
corresponding functions with respect to $x$ and $t$ correspondingly
over a domain$\overline{\Omega _{T}}$. Besides quantities in
\eqref{3.1}, for functions from the space
$C^{l_{1},l_{2}}(\overline{\Omega _{T}})$ the H\"{o}lder
constants of the derivatives $D_{x}^{\alpha }u$ with respect to $t$
are finite with some exponents and the same is true  for the
H\"{o}lder constants of the derivatives $D_{t}^{[l_{2}]}u$ with
respect to $x$ and for mixed derivatives up to some order. Estimates
of all these H\"{o}lder constants are obtained by interpolation
with the using of \eqref{3.1} - see, for example \cite{Sol15}. Below
we will use the space $C^{l,l/4}(\overline{\Omega_{T}})$, where $l$
is a non-integer positive number and the norm in this space we will
denote for simplicity by $|u|^{(l)}_{\overline{\Omega}_{T}}$.

We will use also the spaces $C_{0}^{l_{1},l_{2}}(\overline{\Omega
_{T}})$, where zero at the bottom of the notation denotes a closed
subspace of $C^{l_{1}, l_{2}}(\overline{\Omega_{T}})$, consisting of
functions whose derivatives with respect to $t$ up to the order
$[l_{2}]$ vanish identically at $t =0$. The functions of these
spaces can be considered to be extended identically zero to $t\leq
0$ with the preservation of the class.

We proceed to the formulation of the problem. Denote
$Q_{+}^{N+1}=\left\{ (x,t)\in R^{N}\times R^{1}:x_{N}\geq 0,t\geq
0\right\} $, $Q_{+,T}^{N+1}=\left\{ (x,t)\in R^{N}\times
R^{1}:x_{N}\geq 0,t\in \lbrack 0,T]\right\} $ , $Q_{+}^{N}=\left\{
(x,t)\in R^{N}\times R^{1}:x_{N}=0,t\geq 0\right\} $, \
$Q_{+,T}^{N}=\left\{ (x,t)\in R^{N}\times R^{1}:x_{N}=0,t\in \lbrack
0,T]\right\} $\ , $Q^{N}=$\ $\left\{ (x,t)\in R^{N}\times
R^{1}:x_{N}=0\right\} =R^{N-1}\times R^{1}$\ ,$x=(x^{\prime
},x_{N})$. Consider in $Q_{+}^{N +1}$ the following initial boundary
value problem for the unknown function $u (x,t)$:

\begin{equation}
\frac{\partial u}{\partial t}+\Delta ^{2}u=f(x,t),\quad (x,t)\in Q_{+}^{N+1},
\label{M.2}
\end{equation}

\begin{equation}
\frac{\partial \Delta u}{\partial x_{N}}=g(x^{\prime },t),\quad (x^{\prime
},t)\in Q_{+}^{N},  \label{M.3}
\end{equation}

\begin{equation}
u(x,0)=0,\quad x_{N}\geq 0,  \label{M.1}
\end{equation}

\begin{equation}
\frac{\partial u}{\partial t}-a\Delta _{x^{\prime }}u=h_{1}(x^{\prime
},t),\quad (x^{\prime },t)\in Q_{+}^{N},  \label{M.4.1}
\end{equation}
where $\Delta $ is the Laplace operator, $\Delta _{x^{\prime }}$ is
the Laplace operator with respect to the variables $x^{\prime }$,
$a$ is a positive constant, and we assume that the function $u(x,t)$
is bounded at $|x|\rightarrow \infty $. Together with boundary
dynamic condition \eqref{M.4.1} (instead of this condition
conditions) we also consider other boundary condition

\begin{equation}
\frac{\partial u}{\partial t}-a\frac{\partial u}{\partial
x_{N}}=h_{2}(x^{\prime },t),\quad (x^{\prime },t)\in Q_{+}^{N}.  \label{M.4.2}
\end{equation}

The physical meaning of the condition of the form \eqref{M.4.1} is
explained, for example, in \cite{8}, and the condition \eqref{M.4.2}
is explained, for example, in \cite{7}. In this case, in \cite{7}
was considered a more general boundary condition

\begin{equation*}
\frac{\partial u}{\partial t}-a\Delta _{x^{\prime
}}u-b\frac{\partial u}{\partial x_{N}}=h(x^{\prime },t),\quad
(x^{\prime },t)\in Q_{+}^{N}.
\end{equation*}
But (at least when considering the classes of smooth functions) the
term $b \frac{\partial u}{\partial x_{N}}$ is in this condition a
junior (in order) term  and can be omitted when considering the
model problem.

We assume that the given functions $f$, $g$, $h_{1}$, $h_{2}$ have
compact supports and belong  to the following spaces with zero at
the bottom with some $\gamma \in (0,1)$

\begin{equation}
f\in C_{0}^{\gamma ,\frac{\gamma }{4}}(Q_{+}^{N+1}),\quad g\in
C_{0}^{1+\gamma ,\frac{1+\gamma }{4}}(Q_{+}^{N}),\quad h_{1}\in
C_{0}^{2+\gamma ,\frac{2+\gamma }{4}}(Q_{+}^{N}),\quad h_{2}\in
C_{0}^{3+\gamma ,\frac{3+\gamma }{4}}(Q_{+}^{N}).  \label{M.5}
\end{equation}
The solution $u(x,t)$ we will suppose in the class $C_{0}^{4 +
\gamma, \frac{4 + \gamma}{4}}(Q_{+}^{N +1})$, that is dictated by
the anisotropy of equation \eqref{M.2}. But besides we require that
$u_{t}(x^{\prime}, 0, t) \in C_{0}^{2 + \gamma, \frac{2 +
\gamma}{4}}(Q_{+}^{N})$ or $u_{t}(x^{\prime },0,t)\in
C_{0}^{3+\gamma ,\frac{3+\gamma }{4}}(Q_{+}^{N})$ depending on the
type of dynamic boundary conditions.

Note that in view of \eqref{M.1}, \eqref{M.2}  and the fact that the
given functions $ f $, $ g $, $h_{1}$, $h_{2}$ belong to the spaces
with zero at the bottom the function $u(x,t)$ must e satisfy the
condition $\partial u / \partial t (x,0) \equiv 0$. Together with
\eqref{M.1} this allows to consider the function $u(x,t)$ and all
the given functions $f$, $g$, $h_{1}$, $h_{2}$ to be extended by
zero to $t <0$ and consider relations \eqref{M.2} - \eqref{M.4.2}
for all values of the time variable $t \in R^{1}$.

\subsection{Problem \eqref{M.2} - \eqref{M.4.1}.}

\label{ss3.1}

Consider problem \eqref{M.2} - \eqref{M.4.1}. Denote

\begin{equation}
\rho (x^{\prime },t)\equiv u(x^{\prime },0,t)=u(x,t)|_{x_{N}=0}.
\label{M.11}
\end{equation}
Condition \eqref{M.4.1}  allows to find the value of the unknown
function $u(x,t)$ at $x_{N}=0$, that is the function $\rho
(x^{\prime },t)$, namely,

\begin{equation}
\rho (x^{\prime },t)=\Gamma _{a}(x^{\prime },t)\ast h_{1}(x^{\prime },t),
\label{M.17}
\end{equation}

where $\Gamma _{a}(x^{\prime },t)$ is the fundamental solution of
the heat operator $L_{a}\equiv
\partial /\partial t-a\Delta _{x^{\prime }}$. It is well known that
expression \eqref{M.17} can be obtained from \eqref{M.4.1} by
applying the Fourier transform with respect to the variables
$x^{\prime}$ and $t$. In other words,  denoting for a function
$v(x^{\prime },t)$

\begin{equation*}
\widetilde{v}(\xi _{0},\xi )=\int\limits_{-\infty }^{+\infty
}dt\int\limits_{R^{N-1}}e^{-i\xi _{0}t-i\xi x^{\prime }}v(x^{\prime
},t)dx^{\prime }
\end{equation*}
and applying this transform to relation \eqref{M.4.1} (recall that
all the functions are assumed to be extended by zero to $t<0$), in
view of the known properties of the Fourier transform of
derivatives, we find that

\begin{equation}
\widetilde{\rho }(\xi _{0},\xi )=\frac{\widetilde{h_{1}}(\xi _{0},\xi
)}{i\xi _{0}+a\xi ^{2}}.  \label{M.16}
\end{equation}

Estimates for the potential in \eqref{M.17} are well known in the
case when its density $h_{1}\in C^{k+\gamma ,\frac{k+\gamma
}{2}}(Q_{+}^{N})$.  However, in our case we are dealing with another
anisotropy of smoothness of the space for the density, namely
$h_{1}\in C_{0}^{2+\gamma ,\frac{2+\gamma }{4}}(Q_{+}^{N})$.
Therefore  known properties of the potential for the heat operator
are inapplicable in our case. Furthermore, the results of \cite{1}
and Theorem \ref{T1.1} also are inapplicable, since the anisotropy
of homogeneity of the kernel does not coincide with the anisotropy
of the smoothness of the density $h_{1}$. Therefore, we obtain
estimates of the H\"{o}lder  constants for highest derivatives
of the function $\rho(x^{\prime}, t)$ in the space $C^{4 + \gamma,
\frac{4 + \gamma}{4}}(Q_{+}^{N})$  using Theorem \ref{T2.3}.

Consider first the H\"{o}lder constant in the variable $t$ of
the derivative $\rho_{t}(x^{\prime}, t)$. In view of relation
\eqref{M.16} and known properties of the Fourier transform of
derivatives

\begin{equation}
\widetilde{\rho _{t}}=\frac{i\xi _{0}}{i\xi _{0}+a\xi
^{2}}\widetilde{h_{1}}(\xi _{0},\xi ).  \label{M.18}
\end{equation}

Consider the function

\begin{equation}
\widetilde{m}_{1}(\xi _{0},\xi )=\frac{i\xi _{0}}{i\xi _{0}+a\xi
^{2}}. \label{M.19}
\end{equation}

Evidently this function is homogeneous of degree zero
\begin{equation}
\widetilde{m}_{1}(\lambda ^{2}\xi _{0},\lambda \xi
)=\widetilde{m}_{1}(\xi _{0},\xi ),\quad \lambda >0.  \label{M.20}
\end{equation}

Besides this function is smooth on the set $B_{1}=\left\{ (\xi
_{0},\xi ):1/8<|\xi _{0}|+\xi ^{2}<8\right\}$. Therefore it is
trivial to verify that $\widetilde{m}_{1}(\xi _{0},\xi )$ satisfies
the condition of theorem \ref{T2.3}. Consequently
\begin{equation}
\left\langle \rho _{t}\right\rangle _{t,Q_{+}^{N}}^{(\frac{2+\gamma
}{4})}\leq C\left\langle h_{1}\right\rangle _{t,Q_{+}^{N}}^{(\frac{2+\gamma
}{4})}.  \label{M.21}
\end{equation}

Similar estimates of other derivatives of $\rho$ result in the
estimate

\begin{equation}
\left\vert \rho \right\vert _{C^{4+\gamma ,\frac{4+\gamma
}{4}}(Q_{+,T}^{N})}+\left\vert \rho _{t}\right\vert _{C^{2+\gamma
,\frac{2+\gamma }{4}}(Q_{+,T}^{N})}\leq C_{T}\left\vert h_{1}\right\vert
_{C^{2+\gamma ,\frac{2+\gamma }{4}}(Q_{+,T}^{N})}.  \label{M.25}
\end{equation}

Thus in problem \eqref{M.1}-\eqref{M.4.1} condition \eqref{M.4.1}
can be replaced by the condition

\begin{equation}
u(x^{\prime },0,t)=\rho (x^{\prime },t),  \label{M.26}
\end{equation}
where for the function $\rho (x^{\prime },t)$ estimate \eqref{M.25}
is valid. Then from the results of \cite{6} it follows that this
problem has a unique solution $u(x,t)$ and

\begin{equation}
\left\vert u\right\vert _{C^{4+\gamma ,\frac{4+\gamma
}{4}}(Q_{+,T}^{N+1})}\leq C_{T}\left( \left\vert f\right\vert _{C^{\gamma
,\frac{\gamma }{4}}(Q_{+,T}^{N+1})}+\left\vert g\right\vert _{C^{1+\gamma
,\frac{1+\gamma }{4}}(Q_{+,T}^{N})}+\left\vert h_{1}\right\vert _{C^{2+\gamma
,\frac{2+\gamma }{4}}(Q_{+,T}^{N})}\right) ,  \label{M.27}
\end{equation}
and besides in view of \eqref{M.25}

\begin{equation}
\left\vert u_{t}(x^{\prime },0,t)\right\vert _{C^{2+\gamma ,\frac{2+\gamma
}{4}}(Q_{+,T}^{N})}\leq C_{T}\left\vert h_{1}\right\vert _{C^{2+\gamma
,\frac{2+\gamma }{4}}(Q_{+,T}^{N})}.  \label{M.28}
\end{equation}

Thus we have proved the following assertion.

\begin{theorem}
\label{T3.1} Under conditions \eqref{M.5} and for any $T>0$ problem
\eqref{M.1}-\eqref{M.4.1} has the unique solution $u(x,t)$ from the
space $C^{4+\gamma ,\frac{4+\gamma }{4}}(Q_{+,T}^{N+1})$ and
estimates \eqref{M.27}, \eqref{M.28} are valid.
\end{theorem}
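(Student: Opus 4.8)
The plan is to reduce problem \eqref{M.1}--\eqref{M.4.1} to a boundary value problem that falls within the classical Solonnikov--Ladyzhenskaya theory for parabolic systems \cite{6}, the only nonstandard ingredient being the estimate \eqref{M.25} for the trace $\rho(x',t)=u(x',0,t)$, which in turn rests on Theorem \ref{T2.3}. First I would record that, by the remarks following \eqref{M.5}, all data $f,g,h_1$ have compact support and vanish to the appropriate order at $t=0$, so all functions may be extended by zero to $t<0$ and every relation \eqref{M.2}--\eqref{M.4.1} may be read on the full time line; this is what legitimizes the Fourier transform in $(x',t)$ used to derive \eqref{M.16}. From \eqref{M.16} one obtains $\rho$ via the convolution \eqref{M.17}, and $\widetilde{\rho_t}$ has the multiplier representation \eqref{M.18} with symbol $\widetilde m_1$ from \eqref{M.19}.

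The heart of the argument is the bound \eqref{M.25}. For the summand $\langle\rho_t\rangle^{(\frac{2+\gamma}{4})}_{t}$ I would invoke \eqref{M.21}: the symbol $\widetilde m_1$ is homogeneous of degree zero under the scaling \eqref{M.20} and smooth on the annulus $B_1$, so $\widetilde m_1(A_\lambda\xi)$ is independent of $\lambda$ and smooth on $B_0$, hence \eqref{1.8.1} holds and condition \eqref{A.01.1} is visibly satisfied since $\widetilde m_1$ vanishes when $\xi=0$, $\xi_0\neq0$; Theorem \ref{T2.3} then upgrades Hölder continuity in $t$ alone to joint Hölder continuity in $(x',t)$ of the right anisotropy. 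The remaining seminorms in $|\rho|_{C^{4+\gamma,\frac{4+\gamma}{4}}}$ and $|\rho_t|_{C^{2+\gamma,\frac{2+\gamma}{4}}}$ — that is, the spatial derivatives $D^\alpha_{x'}\rho$ with $|\alpha|\le 4$, the time derivative $\rho_t$ itself, and mixed ones — are treated identically: each corresponds to multiplying $\widetilde{h_1}$ by a symbol of the form $\frac{(i\xi_0)^{k}\xi^\beta}{(i\xi_0+a\xi^2)^{m}}$ with $2m = 2k+|\beta|$, which is again $(\lambda^2,\lambda)$-homogeneous of degree zero, smooth off the origin, and vanishing on $\{\xi=0\}$ whenever it carries at least one spatial factor; for the purely temporal ones \eqref{A.01.1} is used exactly as for $\widetilde m_1$. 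Applying Theorem \ref{T2.3} termwise and summing yields \eqref{M.25} on $R^{N-1}\times R^1$, and restricting to $Q^N_{+,T}$ (using the compact support in $t$ to absorb the passage from $R^1$ to $[0,T]$ into the constant $C_T$) gives the stated estimate.

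With \eqref{M.25} in hand, I would replace the dynamic condition \eqref{M.4.1} by the Dirichlet condition \eqref{M.26}, where $\rho$ is now a \emph{known} function lying in $C^{4+\gamma,\frac{4+\gamma}{4}}(Q^N_{+,T})$ with norm controlled by $|h_1|_{C^{2+\gamma,\frac{2+\gamma}{4}}}$. The resulting problem — the equation \eqref{M.2}, the Neumann-type condition \eqref{M.3} on $\partial\Delta u/\partial x_N$, the initial condition \eqref{M.1}, and the Dirichlet condition \eqref{M.26} — is a standard parabolic initial--boundary value problem for the operator $\partial_t+\Delta^2$ with two boundary conditions on $x_N=0$ whose orders ($0$ and $1$ in the normal derivative of $\Delta u$) satisfy the Lopatinskii--Shapiro complementing condition; hence \cite{6} provides a unique solution $u\in C^{4+\gamma,\frac{4+\gamma}{4}}(Q^{N+1}_{+,T})$ together with the Schauder estimate \eqref{M.27}, where the contribution of $\rho$ is bounded through \eqref{M.25} by $|h_1|_{C^{2+\gamma,\frac{2+\gamma}{4}}}$. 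Finally \eqref{M.28} is read off directly: $u_t(x',0,t)=\rho_t(x',t)$, and \eqref{M.25} already contains the bound $|\rho_t|_{C^{2+\gamma,\frac{2+\gamma}{4}}}\le C_T|h_1|_{C^{2+\gamma,\frac{2+\gamma}{4}}}$.

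The main obstacle is the verification of \eqref{M.25} for \emph{all} the highest-order derivatives of $\rho$, not just $\rho_t$: one must check that each relevant symbol genuinely satisfies the vanishing hypothesis \eqref{A.01.1} with the correct grouping of variables (the temporal variable $\xi_0$ playing the role of the distinguished "$x^{(2)}$-block" with weight $\beta$), and that the anisotropic scaling $A_\lambda$ with exponents matching the parabolic homogeneity of $\Delta^2$ reproduces precisely the space $C^{4+\gamma,\frac{4+\gamma}{4}}$ rather than some other anisotropy — this bookkeeping of exponents is where care is needed, though each individual check is, as the examples in Section \ref{s2} illustrate, routine once the framework of Theorem \ref{T2.3} is set up.
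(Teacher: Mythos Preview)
Your plan is the paper's own: solve \eqref{M.4.1} for $\rho$ via \eqref{M.16}--\eqref{M.17}, obtain \eqref{M.25} by applying Theorem~\ref{T2.3} to the degree-zero homogeneous symbols that arise, replace \eqref{M.4.1} by the Dirichlet condition \eqref{M.26}, quote \cite{6} for \eqref{M.27}, and read off \eqref{M.28} from $u_t|_{x_N=0}=\rho_t$ together with \eqref{M.25}.

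One correction is needed, and it concerns precisely the point you flag as the main obstacle. You assert that $\widetilde m_1$ ``vanishes when $\xi=0$, $\xi_0\neq0$'', but $\widetilde m_1(\xi_0,0)=i\xi_0/i\xi_0=1$; it is at $\xi_0=0$ that $\widetilde m_1$ vanishes. For \eqref{M.21} the input is the $t$-H\"older seminorm $\langle h_1\rangle_t^{((2+\gamma)/4)}$, so in Theorem~\ref{T2.3} one must take $\xi^{(1)}=\xi_0$ (not $\xi$), and then \eqref{A.01.1} reads $\widetilde m_1(0,\xi)=0$, which does hold. More generally the split $\xi^{(1)}/\xi^{(2)}$ is not fixed once and for all as you suggest in your last paragraph: for the purely temporal symbols $(i\xi_0)^m/(i\xi_0+a\xi^2)^m$ one takes $\xi^{(1)}=\xi_0$ and uses the $t$-regularity of $h_1$ as input, whereas for symbols carrying a spatial factor $\xi^\beta$, $|\beta|\ge 1$, one takes $\xi^{(1)}=\xi$ and uses the $x'$-regularity of an appropriate spatial derivative of $h_1$. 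Relatedly, the scaling $A_\lambda$ here is the heat-operator scaling $(\lambda^2\xi_0,\lambda\xi)$ dictated by \eqref{M.4.1}, not the bi-Laplacian scaling; the mismatch between this and the target anisotropy $C^{4+\gamma,(4+\gamma)/4}$ is exactly why Theorem~\ref{T2.3}, rather than Theorem~\ref{T1.1}, is needed. With the role assignments straightened out your argument coincides with the paper's.
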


\subsection{Problem \eqref{M.2} - \eqref{M.1}, \eqref{M.4.2}.}

\label{ss3.2}

As in the previous section, we reduce the problem to a problem with
condition \eqref{M.26} instead of condition \eqref{M.4.2} after determining
the function $\rho (x^{\prime },t)\equiv u(x^{\prime },0,t)$ from the conditions of
the problem. However, in this case the boundary operator in the left side of \eqref{M.4.2} is not
a local operator (as opposed to \eqref{M.4.1}), so its consideration
requires somewhat more complex reasoning. This is due to the fact that
in this case a more complex multiplier arises, which is not a
homogeneous function. To study this multiplier, we extract it's "the main"
homogeneous part.

Using well-known results on the solvability of parabolic boundary value problems, we can
reduce problem \eqref{M.2} - \eqref{M.1}, \eqref {M.4.2} to the
case when $f \equiv 0$ and $g \equiv 0 $. Besides, we will denote for simplicity the function
$h_{2}$ as just $h$.

Make in problem \eqref{M.2} - \eqref{M.1}, \eqref{M.4.2}  the
Fourier transform with respect to the variables $x^{\prime}$ and $t$.
As a result these relations take the form

%
%
%



\begin{equation}
i\xi _{0}\widetilde{u}+\left( -\xi
^{2}+\frac{d^{2}}{dx_{N}^{2}}\right) ^{2}\widetilde{u}=0,\quad x_{N}>0,  \label{M2.19}
\end{equation}

\begin{equation}
\left. \frac{d}{dx_{N}}\left( -\xi ^{2}+\frac{d^{2}}{dx_{N}^{2}}\right)
\widetilde{u}\right\vert _{x_{N}=0}=0,  \label{M2.20}
\end{equation}

\begin{equation}
i\xi _{0}\widetilde{\rho }-a\left.
\frac{d\widetilde{u}}{dx_{N}}\right\vert _{x_{N}=0}=\widetilde{h},  \label{M2.21}
\end{equation}

\begin{equation}
\left\vert \widetilde{u}\right\vert \leq C,\quad x_{N}\rightarrow
\infty ,  \label{M2.22}
\end{equation}
where $\rho (x^{\prime },t)\equiv u(x^{\prime
},0,t)$.

From these relations we find

\begin{equation}
\widetilde{\rho }=\frac{\widetilde{h}(\xi ,\xi _{0})}{i\xi
_{0}+\frac{2\sqrt{\xi ^{2}+\sqrt[2]{-i\xi _{0}}}\sqrt{\xi ^{2}-\sqrt[2]{-i\xi
_{0}}}}{\sqrt{\xi ^{2}+\sqrt[2]{-i\xi _{0}}}+\sqrt{\xi ^{2}-\sqrt[2]{-i\xi
_{0}}}}}=\frac{\widetilde{h^{(k)}}(\xi ,\xi _{0})}{\widetilde{M}(\xi ,\xi _{0})}.
\label{M.43}
\end{equation}

Let $\alpha=(\alpha_{1},...,\alpha_{N-1})$, $\beta=(\beta_{1},...,\beta_{N-1})$ be multi-indexes,
 $|\alpha|=4$, $|\beta|=3$. By simple algebraic manipulations we can get from the last equality

\begin{equation}
\widetilde{D_{x^{\prime }}^{\alpha }\rho }=\frac{i\xi _{l}}{i\xi _{0}+|\xi
|}\widetilde{D_{x^{\prime }}^{\beta }h}+\mathbb{A}\rho,
\label{M.59}
\end{equation}

where $\mathbb{A}$ is a smoothing operator.

Using assertion about multipliers from the previous section we can eventually get the
estimate

\begin{equation}
\left\vert u\right\vert _{Q_{+,T}^{N+1}}^{(4+\gamma ,\frac{4+\gamma
}{4})}+\left\vert D_{t}u(x^{\prime },0,t)\right\vert _{Q_{+,T}^{N}}^{(3+\gamma
,\frac{3+\gamma }{4})}\leq C_{T}\left( \left\vert f\right\vert
_{Q_{+,T}^{N+1}}^{(\gamma ,\frac{\gamma }{4})}+\left\vert g\right\vert
_{Q_{+,T}^{N}}^{(3+\gamma ,\frac{3+\gamma }{4})}+\left\vert h\right\vert
_{Q_{+,T}^{N}}^{(3+\gamma ,\frac{3+\gamma }{4})}\right) .  \label{M.75}
\end{equation}


Thus we have the following theorem.



\begin{theorem}
\label{T3.2} Let $T>0$ be arbitrary and let for problem
\eqref{M.2} - \eqref{M.1}, \eqref{M.4.2} conditions \eqref{M.5} are satisfied.
Then this problem has the unique solution  $u(x,t)$ from the space
 $u(x,t)\in C^{4+\gamma ,\frac{4+\gamma }{4}}(Q_{+,T}^{N+1})$,
$u_{t}(x^{\prime },0,t)\in C^{3+\gamma ,\frac{3+\gamma }{4}}(Q_{+,T}^{N})$
and estimate  \eqref{M.75} is valid.
\end{theorem}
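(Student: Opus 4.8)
The plan is to follow the scheme sketched in Subsection~\ref{ss3.2}, in parallel with the treatment of problem \eqref{M.2}--\eqref{M.4.1} in Subsection~\ref{ss3.1}. First I would use the known solvability theory for parabolic initial--boundary value problems (the results of \cite{6}) to subtract from $u$ a function $u_{1}$ solving \eqref{M.2}, \eqref{M.3}, \eqref{M.1} with the given $f$, $g$ and the Dirichlet condition $u_{1}|_{x_{N}=0}=0$; this reduces the problem to the case $f\equiv 0$, $g\equiv 0$ at the cost of the norms of $f$ and $g$ in the right-hand side of \eqref{M.75}, the new data for \eqref{M.4.2} being $h:=h_{2}+a\,\partial_{x_{N}}u_{1}|_{x_{N}=0}\in C_{0}^{3+\gamma,\frac{3+\gamma}{4}}(Q_{+,T}^{N})$, because $u_{1}|_{x_{N}=0}=0$ forces $\partial_{t}u_{1}|_{x_{N}=0}\equiv 0$ while $u_{1}\in C^{4+\gamma,\frac{4+\gamma}{4}}$ gives $\partial_{x_{N}}u_{1}|_{x_{N}=0}\in C^{3+\gamma,\frac{3+\gamma}{4}}$. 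Applying the Fourier transform in $(x',t)$, the reduced problem becomes the ordinary-differential boundary-value problem \eqref{M2.19}--\eqref{M2.22} in $x_{N}$. I would pick the $x_{N}\to+\infty$ decaying solution $\widetilde u=c_{1}e^{-\mu_{1}x_{N}}+c_{2}e^{-\mu_{2}x_{N}}$ with $\mu_{1,2}=\sqrt{\xi^{2}\pm\sqrt{-i\xi_{0}}}$ (principal branches, hence $\mathrm{Re}\,\mu_{j}>0$), impose the boundary relations \eqref{M2.20}, \eqref{M2.21}, solve the resulting $2\times 2$ linear system for $c_{1},c_{2}$, and obtain representation \eqref{M.43} with $\widetilde M(\xi,\xi_{0})=i\xi_{0}+\dfrac{2\mu_{1}\mu_{2}}{\mu_{1}+\mu_{2}}$.

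The heart of the proof is the analysis of the multiplier $1/\widetilde M$. Since $i\xi_{0}$ is purely imaginary while $\mathrm{Re}\,\mu_{j}>0$, $\widetilde M$ has no zeros and satisfies a uniform Lopatinskii-type lower bound $|\widetilde M(\xi,\xi_{0})|\ge c\,(|\xi_{0}|+|\xi|)$, so $1/\widetilde M$ is a well-defined bounded symbol. Using $\mu_{1}\mu_{2}=\sqrt{\xi^{4}+i\xi_{0}}$ and $\mu_{1}+\mu_{2}=\sqrt{2}\,\sqrt{\xi^{2}+\sqrt{\xi^{4}+i\xi_{0}}}$, I would show by elementary manipulation with these radicals that $2\mu_{1}\mu_{2}/(\mu_{1}+\mu_{2})$ equals $|\xi|$ plus a remainder which, after division by $i\xi_{0}+|\xi|$, is of strictly negative order and tends to $0$ in each of the regimes $|\xi|\to\infty$, $|\xi_{0}|\to\infty$. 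Then, multiplying \eqref{M.43} by $(i\xi)^{\alpha}$ with $|\alpha|=4$, pulling out the factor $(i\xi)^{\beta}$ with $|\beta|=3$ and $\alpha=\beta+e_{l}$, and expanding $1/\widetilde M$ in the Neumann series around $1/(i\xi_{0}+|\xi|)$, one arrives at the identity \eqref{M.59}, in which the principal multiplier is $\dfrac{i\xi_{l}}{i\xi_{0}+|\xi|}$ and $\mathbb A$ has a symbol that gains a positive power in the anisotropic scaling; hence $\mathbb A$ is a smoothing operator, and the term $\mathbb A\rho$ is absorbed by interpolation together with an a priori $L_{2}$-bound for $\rho$ coming from the known $L_{p}$-solvability of the problem. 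The principal symbol $\dfrac{i\xi_{l}}{i\xi_{0}+|\xi|}$ is smooth away from the origin, positively homogeneous of degree zero with $\xi_{0}$ and the components of $\xi$ carrying equal weight, and vanishes on $\{\xi=0\}$; therefore, taking $x^{(1)}=x'$ and $x^{(2)}=t$, conditions \eqref{A.01.1} and \eqref{A.02.1} are satisfied (the latter via Lemma~\ref{L2.3.1}), and Theorem~\ref{T2.3} gives that $D_{x'}^{\alpha}\rho$, $|\alpha|=4$, is H$\ddot{o}$lder continuous in all the variables $(x',t)$ with the exponents dictated by $C^{4+\gamma,\frac{4+\gamma}{4}}(Q_{+,T}^{N})$, with H$\ddot{o}$lder seminorm controlled by $\langle D_{x'}^{\beta}h\rangle_{x',Q_{+,T}^{N}}^{(\gamma)}\le|h|_{Q_{+,T}^{N}}^{(3+\gamma,\frac{3+\gamma}{4})}$. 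The lower-order derivatives of $\rho$ are handled by interpolation or by the evident variants of the same multipliers (in the spirit of the examples after Theorem~\ref{T2.3}), so that altogether $|\rho|_{Q_{+,T}^{N}}^{(4+\gamma,\frac{4+\gamma}{4})}\le C_{T}\,|h|_{Q_{+,T}^{N}}^{(3+\gamma,\frac{3+\gamma}{4})}$.

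Having estimated $\rho(x',t)=u(x',0,t)$, I would then, exactly as in Subsection~\ref{ss3.1}, replace the dynamic condition \eqref{M.4.2} by the condition \eqref{M.26}, $u(x',0,t)=\rho(x',t)$: the problem \eqref{M.2}, \eqref{M.3}, \eqref{M.1}, \eqref{M.26} is an ordinary parabolic initial--boundary value problem covered by \cite{6}, hence it has a unique solution $u\in C_{0}^{4+\gamma,\frac{4+\gamma}{4}}(Q_{+,T}^{N+1})$ for which, after reinstating $f$ and $g$, the bound of $|u|_{Q_{+,T}^{N+1}}^{(4+\gamma,\frac{4+\gamma}{4})}$ asserted in \eqref{M.75} holds. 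For the boundary trace of $u_{t}$ I argue as for \eqref{M.28}: by \eqref{M.4.2}, $u_{t}(x',0,t)=a\,u_{x_{N}}(x',0,t)+h(x',t)$, and since $u\in C^{4+\gamma,\frac{4+\gamma}{4}}(Q_{+,T}^{N+1})$ implies $u_{x_{N}}|_{x_{N}=0}\in C^{3+\gamma,\frac{3+\gamma}{4}}(Q_{+,T}^{N})$, while $h\in C_{0}^{3+\gamma,\frac{3+\gamma}{4}}(Q_{+,T}^{N})$, we obtain $u_{t}(x',0,t)\in C_{0}^{3+\gamma,\frac{3+\gamma}{4}}(Q_{+,T}^{N})$ with the corresponding estimate; combining the two bounds yields \eqref{M.75}. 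Uniqueness is inherited from uniqueness at each step: the parabolic problem of \cite{6}, the decaying solution of the transformed $x_{N}$-equation, and the invertibility of $\widetilde M$.

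The step I expect to be the main obstacle is the second one: isolating the homogeneous principal part $i\xi_{0}+|\xi|$ from the nested radicals defining $\widetilde M$, establishing the uniform Lopatinskii lower bound (checking that no cancellation between $i\xi_{0}$ and $\frac{2\mu_{1}\mu_{2}}{\mu_{1}+\mu_{2}}$ occurs), and --- most delicately --- verifying that the remainder operator $\mathbb A$ in \eqref{M.59} is uniformly smoothing across all frequency zones (small $|\xi|$, small $|\xi_{0}|$, and the parabolic far zone $|\xi_{0}|\gg|\xi|^{4}$), so that its contribution is genuinely of lower order. Once this is settled, the application of Theorem~\ref{T2.3} and the reduction to the classical parabolic theory of \cite{6} are routine.
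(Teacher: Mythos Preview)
Your proposal is correct and follows essentially the same route as the paper's (explicitly sketchy) argument: reduce to $f\equiv0$, $g\equiv0$, derive \eqref{M.43} from the decaying solution of \eqref{M2.19}--\eqref{M2.22}, extract the homogeneous principal symbol $i\xi_{l}/(i\xi_{0}+|\xi|)$ to obtain \eqref{M.59}, apply Theorem~\ref{T2.3}, and finish via \cite{6} after replacing \eqref{M.4.2} by \eqref{M.26}. Your reading of the boundary trace $u_{t}(x',0,t)=a\,u_{x_{N}}(x',0,t)+h$ directly from \eqref{M.4.2} once $u\in C^{4+\gamma,\frac{4+\gamma}{4}}$ is established is a clean way to get the second half of \eqref{M.75}, and your identification of the main obstacle (the Lopatinskii lower bound for $\widetilde M$ and the uniform smoothing of $\mathbb A$ across all frequency zones) is exactly the point the paper leaves to its ``forthcoming'' detailed version.
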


\end{document}